\documentclass[10pt]{article}

\usepackage{pdfsync}

\usepackage{datetime2} 

\usepackage[utf8]{inputenc}
\usepackage{amsmath}
\usepackage{amsfonts}
\usepackage{amssymb}
\usepackage{yfonts} 
\usepackage{eufrak}
\usepackage{mathrsfs}
\usepackage{dsfont}
\usepackage{fancyhdr}
\usepackage{indentfirst}
\usepackage{graphicx}
\usepackage{newlfont}
\usepackage{amssymb}
\usepackage{amsmath}
\usepackage{latexsym}
\usepackage{amsthm}
\usepackage{mathtools}
\usepackage{nicefrac}
\usepackage{epstopdf}
\usepackage{caption}
\usepackage{color}
\usepackage{dsfont}
\usepackage{comment}
\usepackage{hyperref}
\usepackage{url}
\usepackage{esvect}

\usepackage{amsmath}                                                        
\numberwithin{equation}{section}
\usepackage{graphicx}                                                       
\usepackage{subfigure}
\usepackage{enumitem}                                                    
\usepackage{hyperref}
\usepackage{verbatim}
\usepackage{xcolor}

\usepackage{amssymb}                                                        
\usepackage[mathscr]{eucal}                                             
\usepackage{cancel}                                                             
\usepackage[normalem]{ulem}                                                                 
\usepackage{pstricks}
\usepackage{rotating}
\usepackage{lscape}
\usepackage[paperwidth=8.5in,paperheight=11in,top=1.00in, bottom=1.00in, left=1.00in, right=1.00in]{geometry}
\usepackage{mathtools}                                                      
\mathtoolsset{showonlyrefs=true}                                    
\usepackage{fixltx2e,amsmath}                                           
\MakeRobust{\eqref}

\usepackage{dsfont}

\usepackage{mathdots}
\usepackage{amsthm}                                                             
\allowdisplaybreaks

\theoremstyle{plain}
\newtheorem{theorem}{Theorem}
\numberwithin{theorem}{section}

\theoremstyle{definition}
\newtheorem{definition}[theorem]{Definition}

\newtheorem{notation}[theorem]{Notation}
\newtheorem{remark}[theorem]{Remark}
\newtheorem{assumption}[theorem]{Assumption}

\makeatletter
\DeclareRobustCommand{\cev}[1]{%
  {\mathpalette\do@cev{#1}}%
}
\newcommand{\do@cev}[2]{%
  \vbox{\offinterlineskip
    \sbox\z@{$\m@th#1 x$}%
    \ialign{##\cr
      \hidewidth\reflectbox{$\m@th#1\vec{}\mkern4mu$}\hidewidth\cr
      \noalign{\kern-\ht\z@}
      $\m@th#1#2$\cr
    }%
  }%
} \makeatother

\usepackage{bm}
\def \VX {\mathbf{Z}}
\def \BZ {\z}
\def \zz {{z}}
\def \bz {{\z}}
\def \KK {\mathcal{L}_{\m}}
\def \ff {{F}}
\def \gg {{G}}

\def \y {{\eta}}
\def \s {{\sigma}}

\def \a {{\alpha}}
\def \b {{\beta}}

\def \R {\mathbb{R}}
\def \p {\partial}

\renewcommand{\[}{\left[}

\renewcommand\d{\delta}

\def \Lip {\textnormal{Lip}}

\def \R  {{\mathbb {R}}}

\def \n {{\nu}}
\def \m {{\mu}}
\def \y {{\eta}}
\def \th {{\theta}}
\def \z {{\zeta}}
\def \p {{\partial}}
\def \a {{\alpha}}
\def \O {{\Omega}}

\def \d {{\delta}}

\def \var {{\text{var}}}

\def \k {{\kappa}}

\def \a {{\alpha}}
\def \b {{\beta}}
\def \d {{\delta}}

\def \G {\Ga}

\def \Ga {{\Gamma}}

\def \It\^o {It\^o }
\def \s {{\sigma}}
\def \K {\mathcal{L}}
\def \Y {\mathcal{Y}}

\def \R {{\mathbb {R}}}

\def \C {\mathbf{C}}

\def \n {{\nu}}

\def \m {{\mu}}
\def \y {{\eta}}
\def \th {{\theta}}

\def \O {{\Omega}}
\def \phi {{\varphi}}

\def \tilde {\widetilde}

\def\l {\lambda}

\def \A {\mathcal{A}}

\def \Ã  {{\`a }}
\def \Ã¨ {{\`e }}
\def \Ã² {{\`o }}
\def \Ã¹ {{\`u }}

\begin{document}

\title{Trading in residential energy systems with storage:\\ a kinetic mean-field approach}

\author{
Margherita Fabini \and Andrea Pascucci\thanks{Dipartimento di Matematica, Universit\`a di Bologna,
Bologna, Italy. \textbf{e-mail}: andrea.pascucci@unibo.it} \and Alessio
Rondelli\thanks{Dipartimento di Matematica, Universit\`a di Bologna, Bologna, Italy.
\textbf{e-mail}: alessio.rondelli2@unibo.it}}

\date{This version: \today}
\maketitle

\begin{abstract}
We study a stochastic optimal control problem motivated by the operation of a large ensemble of
residential storage devices coordinated by an energy aggregator. The aggregator remunerates
prosumers in exchange for direct control of their batteries and seeks to jointly (i) reduce local
supply-demand imbalances and (ii) exploit intraday price fluctuations through energy arbitrage.
The core modeling feature is a {\it kinetic mean-field formulation}: the state of charge is
treated as a position, the charging/discharging power as a velocity, and the control as an
acceleration, thus encoding ramp-rate limitations and producing smooth power trajectories. This
leads to a controlled McKean-Vlasov Langevin-type system in which both the drift and the objective
functional depend on the time-marginal law of the state, allowing one to capture endogenous
interaction effects and population-level stabilization incentives. The performance criterion
combines the cost of grid exchange with convex penalties representing degradation and control
effort, and includes mean-field terms that promote alignment with the population average; terminal
contributions account for residual energy value and end-of-horizon coordination. The resulting
control problem is Markovian and hypoelliptic, and naturally connects mean-field control with
ultraparabolic operators of kinetic type. This viewpoint provides a coherent bridge between
physically constrained storage actuation and law-dependent incentives in large-scale energy
management. Numerical experiments based on deep learning solvers are presented to validate the
model. From a computational standpoint, the problem is particularly challenging, as it yields a
fully coupled forward-backward stochastic system associated with a five-dimensional
Hamilton-Jacobi-Bellman equation.
\end{abstract}

\medskip
\noindent \textbf{Keywords.} Stochastic optimal control; mean-field games; McKean-Vlasov Langevin
dynamics; kinetic Fokker-Planck equations; hypoellipticity; optimal energy storage; energy
arbitrage.


\section*{Declarations}

\subsection*{Availability of data and material}
The electricity price data (Italian day-ahead PUN) are publicly available from the website of the
Gestore dei Mercati Energetici (GME). The household photovoltaic and load time series used for
calibration are sourced from the publicly available SolarEdge Monitoring platform (see Section
3.1--3.2 of the manuscript). Additional materials (e.g., processed datasets and implementation
details sufficient to reproduce the numerical experiments) are available from the corresponding
author upon reasonable request.

\subsection*{Competing interests}
The authors declare that they have no competing interests.

\subsection*{Funding}
The authors declare that no specific funding was received for this work.

\subsection*{Authors' contributions}
M.F., A.P., and A.R. jointly conceived the study and developed the methodology. All authors
contributed to the mathematical analysis and to the interpretation of the results. Numerical
experiments were implemented and validated by the authors. All authors wrote the manuscript and
approved the final version.

\subsection*{Acknowledgements}
A.P. and A.R. are members of INDAM-GNAMPA. The authors are grateful to Vincenzo Vespri for
inviting us to contribute to this special issue and for bringing to our attention the
mathematically interesting problems that arise in energy systems with storage. We thank Daniele
Spinella for several stimulating discussions on the topic. We also thank Emanuele Lippi for his
assistance with the numerical implementation of the tests.

\section{Introduction}\label{intro}
We investigate a stochastic optimal control problem modeling the strategy of an energy aggregator
managing a distributed network of grid-connected residential prosumers equipped with photovoltaic
panels and battery storage. The framework is grounded in a contractual arrangement wherein the
aggregator directly controls the storage assets in exchange for prosumer remuneration. This
centralized coordination targets two complementary objectives: (i) enhancing grid reliability by
mitigating supply-demand imbalances and smoothing local fluctuations; and (ii) extracting value
through energy arbitrage by leveraging electricity price volatility. The arbitrage component is
particularly relevant when storage is structurally underutilized, a scenario typical of winter
months characterized by reduced solar irradiance. Unlike individual households, the aggregator can
access wholesale markets to monetize intraday price spreads, effectively transforming otherwise
idle storage capacity into a revenue-generating asset. This aggregator-based modeling is
explicitly motivated by recent U.S. regulatory developments, most notably FERC Order No.\ 841,
which requires organized wholesale electricity markets to admit energy storage resources,
including behind-the-meter and aggregated residential batteries, under market rules tailored to
their operational constraints; see \cite{Mich} for background and discussion. This regulatory
shift makes the centralized stochastic optimal control of a large population of grid-connected
prosumers a natural and economically meaningful problem.

To translate these economic and regulatory considerations into a tractable mathematical problem,
we formulate the aggregator's decision-making process as a continuous-time stochastic control
problem for a large population of interacting systems. The resulting dynamics naturally combine
idiosyncratic uncertainty at the household level with aggregate effects induced by coordinated
operation, leading to a controlled mean-field formulation. More precisely, we consider a stylized
model governed by the following state processes:
\begin{itemize}
  \item $S_t$: electricity price;
  \item $H_t$: net home load (in kW), i.e., household consumption minus renewable generation;
  \item $V_t^{a}$: battery power (in kW), with the convention $V_t^{a}>0$ for charging
        and $V_t^{a}<0$ for discharging;
  \item $X^{a}_{t}$: battery's state of charge (SOC), defined as
     $$X^{a}_{t}=X^{a}_{0}+\int_{0}^{t}V^{a}_{s}d s;$$
    \item $a_{t}$: control process representing the rate of change of power (ramp rate).
\end{itemize}
We denote by
  $$\VX_{t}^{a}:=(S_{t}, H_{t}, V_{t}^{a}, X_{t}^{a}),\qquad t\in[0,T],$$
the state process and by $[\VX_{t}^{a}]$ its time-marginal distribution. 
We assume that $\VX^{a}$ solves the {\it controlled McKean-Vlasov} system
 $$
\begin{cases}
  dS_t = b^{S}(t,S_{t}) dt + \sigma^{S}(t,S_{t}) dW^{S}_t,\\
  dH_t = b^{H}(t,H_{t}) dt + \sigma^{H}(t,H_{t}) dW^{H}_t,\\
  dV^{a}_{t}=b^{V}(t,\VX^{a}_{t},[\VX_{t}^{a}],a_{t})dt+\s^V(t,\VX^{a}_{t},[\VX_{t}^{a}])dW^{V}_{t},\\
  dX^{a}_{t}=V^{a}_{t}dt,
\end{cases}
 $$
where $(W^S,W^H,W^V)$ is a three-dimensional Brownian motion. The energy manager aims to minimize
a cost functional of the form
\begin{equation}\label{e2}
  J(a) = E\left[ \int_0^T f(t, \VX^{a}_t,[\VX_{t}^{a}],a_t) dt +
  g(\VX^{a}_T,[\VX_{T}^{a}])\right],
\end{equation}
representing the trade-off between energy costs, hardware degradation, and control effort.

\subsection{A kinetic mean-field approach}
Prior to specifying the precise structure of the model coefficients and the objective functional,
we outline the mathematical formulation of the problem. A distinguishing feature of our approach
is the adoption of a \emph{kinetic mean-field} framework. Specifically, the system dynamics are
governed by 
the controlled Langevin-type equation
\begin{equation}\label{e1}
 \begin{cases}
  dV^{a}_{t}=b(t,\VX^{a}_{t},[\VX^{a}_{t}],a_{t})dt+\s(t,\VX^{a}_{t},[\VX^{a}_{t}])dW_{t},\\
  dX^{a}_{t}=V^{a}_{t}dt.
 \end{cases}
\end{equation}
Langevin dynamics of the form \eqref{e1} are ubiquitous in physics, biology, and mathematical
finance, having been extensively studied since the foundational works \cite{MR1503147,Langevin}.
They appear naturally in the kinetic theory of gases and fluid dynamics (see, e.g.,
\cite{MR2459454,MR8130}). From a financial perspective, \eqref{e1} can be viewed as a nonlinear
mean-field generalization of the degenerate SDEs typically encountered in the valuation of
path-dependent derivatives, such as Asian options; see \cite{MR1830951,MR2791231}.

In our model the system state is represented through a {\it phase-space analogy} in which:
\begin{itemize}
    \item the battery state of charge $X^{a}$ plays the role of the \textit{position};
    \item the charging/discharging power $V^{a}$ plays the role of the \textit{velocity};
    \item the control $a$ plays the role of the \textit{acceleration} (or force).
\end{itemize}
Unlike standard models in which power can be adjusted instantaneously, our formulation explicitly
accounts for physical ramp-rate constraints, yielding a smooth evolution of the power output.
Moreover, \eqref{e1} features \textit{mean-field} terms: both the dynamics and the cost depend not
only on the individual state process but also on its time-marginal law. This dependence enables a
consistent description of the collective behavior of a large population of heterogeneous storage
devices under centralized control.

\subsection{Related literature and numerical aspects}
The optimal management of distributed energy resources has recently attracted considerable
attention across engineering, economics, and applied mathematics, and raises modeling and
mathematical challenges due to physical constraints, market frictions, and large-scale
interactions. Existing contributions adopt complementary perspectives, ranging from decentralized
competitive formulations to centralized coordination mechanisms.

A significant strand of literature focuses on decentralized approaches, often modeled through Mean
Field Games (MFG), where individual agents optimize their strategies against an aggregate signal.
In a purely deterministic setting, \cite{Strbac} investigates the coordination of micro-storage
devices where the market-clearing price is characterized as the fixed point of a coupled
Hamilton-Jacobi-Bellman system, broadcast to agents implementing decentralized responses. Moving
to the stochastic framework, \cite{MR4054298} models a large smart grid as a nonzero-sum
$N$-player game with endogenous price formation. In their setting, each node buys or sells
electricity based on instantaneous net home load and storage decisions, directly impacting the
spot price. The authors derive the corresponding extended MFG limit with common noise and compare
the decentralized equilibrium with the solution of a central planner, obtaining explicit results
in a linear-quadratic specification.

Complementary to these game-theoretic formulations are centralized control approaches, where a
single entity (e.g., an aggregator) optimizes the system's global performance. This perspective,
adopted in our work, often requires advanced techniques to handle constraints and dimensionality.
For instance, \cite{MR4255454} addresses the optimal operation of pumped-hydro storage with hard
capacity constraints via a level-set reformulation, though in a single-agent setting {\it without
mean-field interactions}. In the context of large populations, \cite{MR4445582} develops a
Pontryagin-type analysis for an extended McKean-Vlasov control problem applied to smart grids.
Their work accommodates non-quadratic costs and general filtrations, allowing for jump-type
shocks, thus departing from the standard Markovian linear-quadratic assumptions. Similarly,
\cite{MR4588432} proposes a level-set methodology for McKean-Vlasov control under running and
terminal constraints in law. In their electricity storage application, the spot price features a
mean-field price-impact term driven by aggregate net injections, while the controller acts through
constrained injection-withdrawal decisions. One of the differences of our approach lies in the
\emph{kinetic} nature of the control problem. Unlike standard formulations that control power
directly and may yield discontinuous strategies, our model controls the ramp rate, ensuring smooth
power profiles that respect physical battery constraints.

We recall that well-posedness results for Langevin backward SDEs (in the non-McKean-Vlasov
setting) were first established in \cite{MR1941093}. Since then, this class of degenerate SDEs and
associated PDEs has been investigated in several works (see, e.g., \cite{MR2659772},
\cite{MR2802040}, \cite{MR4660246}, and the references therein). The extension to the
McKean-Vlasov framework has been addressed more recently in \cite{MR4612657}, in the infinite
dimensional case, and \cite{pasron2025}.

From a mathematical perspective, the kinetic formulation preserves the Markovian structure of the
problem, allowing us to exploit the extensive theory and recent advances concerning hypoelliptic
operators of ultraparabolic type (see, e.g., \cite{MR3923847}, \cite{MR4700191},
\cite{MR4069607}). At the same time, the present framework raises a number of new questions that
are interesting in their own right and deserve further investigation. In order to keep the
exposition focused on the main contributions, we only sketch the proof of some results (e.g.
Theorem \ref{t1}). In particular, we outline the main steps of the argument and explain how the
kinetic formulation can be combined with the techniques in \cite{MR2048401} to obtain the result.
Similarly, the proof of Theorem \ref{th3} relies on classical continuous-dependence results for
PDEs, extended here to the ultraparabolic setting following the approach in \cite{MR2352998}. A
comprehensive mathematical treatment, including all intermediate technical details, is provided in
the companion work \cite{pas_ron}.

From the computational standpoint, the McKean-Vlasov optimality system leads to a coupled
Forward-Backward SDE in which the backward component depends on the law of the forward state,
making classical grid-based schemes impractical. Numerical methods for this class of problems are
still developing (see, e.g., \cite{MR3736669,MR4522347}). In our numerical tests we therefore use
deep learning-based solvers, following \cite{MR4478485} and \cite{Raissi_2018}, which propose a neural-network
approximation of the decoupling fields, together with dynamically updated moment estimates to treat the mean-field terms.

\medskip
\noindent {\bf Structure of the paper.} This paper is organized as follows. In Section~\ref{sec2}
we present the main results together with the standing assumptions. In particular, we establish
the connection between the stochastic optimal control problem, the associated forward-backward
stochastic differential equations, and the Hamilton-Jacobi-Bellman equation driven by a kinetic
hypoelliptic operator. We also introduce the anisotropic and intrinsic functional spaces that
provide the natural analytical framework for the problem. Section~\ref{sec:numerical-tests} is devoted to
numerical experiments: we specify a fully consistent stylized model and describe the data used for
calibration as well as for the numerical tests. Finally, Section~\ref{sec4} contains the proof of
the main result, Theorem~\ref{th3}.

\medskip
\noindent Throughout the paper we use the following general notation.
\begin{itemize}
  \item We write $f\lesssim g$ if there exists a constant $\k>0$ such that
  $$f\le \k g.$$
  The constant $\k$ is \emph{structural}: it may depend only on the fixed data of the problem,
  in particular the dimension, the time horizon $T$, and bounds on the coefficients appearing
  in the main assumptions, but it is independent of the relevant variables and of the specific
  functions $f$ and $g$ involved in the estimate.

  \item For $t\in[0,T]$, $\Lip_{t,T}(\R^{d})$ denotes the set of real-valued functions $u$ defined on
  $[t,T]\times\R^{d}$ such that $u(s,\cdot)$ is Lipschitz continuous on $\R^{d}$ uniformly with
  respect to $s\in[t,T]$, namely
  $$\|u\|_{\Lip_{t,T}}
    :=\sup_{t\le s\le T}\sup_{z,z'\in \R^{d}\atop z\neq z'}
      \frac{|u(s,z)-u(s,z')|}{|z-z'|}<\infty.$$

  \item $\mathcal{P}_p(\R^{d})$ denotes the set of probability measures on $\R^{d}$ with finite
  $p$-th moment, endowed with the $p$-Wasserstein distance
  $$\mathcal{W}_{p}(\mu, \nu)
    :=\left(\inf_{\pi \in \Pi(\mu, \nu)}
      \int_{\R^{d}\times\R^{d}} |x-y|^{p}d\pi(x,y)\right)^{1/p},$$
  where $\Pi(\mu, \nu)$ denotes the set of probability measures on $\R^{d}\times\R^{d}$ with
  marginals $\mu$ and $\nu$.
\end{itemize}

\section{General strategy and main results}\label{sec2} For ease of exposition, we carry out the rigorous
mathematical analysis only for the reduced-form model \eqref{e1}. This entails no loss of
generality, since the inclusion of additional non-degenerate processes can be treated in the same
way and does not introduce any further mathematical difficulties. In this section we outline the
general strategy and state the main results; all proofs and more technical arguments are deferred
to Section \ref{sec4}.

Let $(\Omega,\mathcal{F},P)$ be a probability space endowed with a filtration
$(\mathcal{F}_t)_{t\in[0,T]}$ satisfying the usual conditions and supporting a one-dimensional
Brownian motion $W$. We denote by $\mathbb{A}$ the set of square-integrable, progressively
measurable control processes $a=(a_t)_{0\le t\le T}$ taking values in a closed convex subset
$A\subseteq\R$. We consider the McKean-Vlasov optimal control problem
\begin{align}\label{MKVCP}
  &\begin{cases}
    dV_t^a = b(t,\VX_t^a,[\VX_t^a],a_t)dt
    + \sigma(t,\VX_t^a,[\VX_t^a])dW_t,
    \qquad \VX^a \equiv (V^a,X^a),\\
    dX_t^a = V_t^a dt,\\
    \VX_0^a = \eta \in L^2(\Omega,\mathcal{F}_0,P),
  \end{cases}\\
  &\inf_{a\in\mathbb{A}} J(a), \qquad
  J(a)=E\left[\int_0^T f(t,\VX_t^a,[\VX_t^a],a_t)dt
  + g(\VX_T^a,[\VX_T^a])\right].
\end{align}
The main result of this section is Theorem~\ref{t1}, which states that, under
Assumptions~\ref{ass1} and~\ref{ass2}, the control problem~\eqref{MKVCP} is well posed. Moreover,
the optimal control is characterized through a fully coupled McKean-Vlasov FBSDE and this
representation provides a natural framework for the numerical approximation of both the optimal
control and the associated value.

We solve the control problem \eqref{MKVCP} by means of a fixed point argument on the flow of
marginal laws. The procedure can be summarized as follows.
\begin{itemize}
\item
Given a flow of probability measures\footnote{$C([0,T],\mathcal{P}_2(\R^2))$ denotes the space of
continuous flows of probability measures on $\R^2$ with finite second moment.} 
$\m=(\mu_t)_{t\in[0,T]}\in C([0,T],\mathcal{P}_2(\R^2))$, we consider the classical stochastic
control problem obtained by freezing the measure argument in the coefficients of the McKean-Vlasov
dynamics \eqref{MKVCP}
\begin{align}\label{frozen_SOC}
  &\begin{cases}
    dV_t^{a,\m} = b(t,\VX_t^{a,\m},\m_t,a_t)dt + \s(t,\VX_t^{a,\m},\m_t)dW_t,\\
    dX_t^{a,\m} = V_t^{a,\m} dt,\\
    \VX^{a,\m}_0 = \eta\in L^2(\O,\mathcal{F}_0, P),
  \end{cases}\\
    &\inf_{a\in\mathbb{A}}J^\m(a),\quad\mathrm{with}\ J^\m(a)=E\left[\int_0^T f(t,\VX^{a,\m}_t,\m_t,a_t)dt +
    g(\VX^{a,\m}_T,\m_T)\right].
\end{align}
We prove well-posedness of \eqref{frozen_SOC} and denote by $\VX^{\m}$ the optimal state process.
\item We consider the mapping on $C([0,T],\mathcal{P}_2(\R^2))$ defined by
  $$\Phi:\m \longmapsto \left([\VX^{\mu}_t]\right)_{t\in[0,T]}$$
and show that $\Phi$ is a contraction with respect to a suitable metric. Hence, it admits a unique
fixed point, $\bar{\m}=\Phi(\bar{\m})$, which yields the solution to the original control problem
\eqref{MKVCP}.
\end{itemize}
Since the {\it frozen}\footnote{Frozen refers to the fact that the measure argument in the
McKean-Vlasov coefficients is kept fixed.} problem \eqref{frozen_SOC} no longer depends on the law
of the solution, we can construct the corresponding FBSDE and the associated HJB equation.
Moreover, since the control does not enter the diffusion coefficient $\sigma$, it is convenient to
work with the {\it reduced} Hamiltonian, defined by
\begin{equation}\label{H}
 H(t,\zz,\mu,p_v,a):= b(t,\zz,\mu,a) p_v + f(t,\zz,\mu,a),
\end{equation}
where $\zz = (v,x)$ and $p_v$ denotes the adjoint variable associated with the velocity component.
\begin{assumption}\label{ass2}
For all $(t,\zz,\m,p_{v})\in [0,T]\times \R^{2}\times\mathcal{P}_2(\R^{2})\times \R$, the reduced
Hamiltonian $H$ in \eqref{H} admits a unique minimizer $\hat{a}=\hat{a}(t,\zz,\m,p_{v})$ such that
\begin{align}\label{w11}
  |\hat{a}(t,\zz,\m,p_{v})|\lesssim|p_{v}|,\qquad
  |\hat{a}(t,\zz,\m,p_{v})-\hat{a}(t,\zz',\m',p_{v}')|\lesssim |\zz-\zz'|+\mathcal{W}_{1}(\m,\m')+|p_{v}-p_{v}'|.
\end{align}
\end{assumption}
Notice that the unique minimizer of the Hamiltonian 
depends only on the component $p_v$, rather than on the full adjoint vector $p$. Therefore, in
this framework, non-degeneracy is required solely for the diffusion coefficient of $V$.

Given the minimizer $\hat{a}$, we consider the FBSDE associated to
\eqref{frozen_SOC} 
\begin{equation}\label{linFBSDE}
  \begin{cases}
  dV_{t}=b\left(t,\VX_{t},\mu_t,\hat{a}(t,\VX_t,\mu_t,\sigma^{-1}(t,\VX_t,\mu_t)\BZ_t)\right)dt+\sigma(t,\VX_{t},\mu_t)dW_{t},\\
  dX_{t}=V_{t}dt,\\
  dY_t = - f\left(t,\VX_t,\mu_t,\hat{a}(t,\VX_t,\mu_t,\sigma^{-1}(t,\VX_t,\mu_t)\BZ_t)\right)dt + \BZ_t dW_{t},\\
  \VX_{0}= \eta,\quad Y_T = g(\VX_T,\mu_T),
\end{cases}
\end{equation}
and the corresponding HJB equation
\begin{equation}\label{HJB}
 \begin{cases}
  \KK u^{\m}(t,\zz)+H\left(t,\zz,\mu_t,\partial_v u^{\m}(t,\zz),\hat{a}(t,\zz,\mu_t,\partial_v u^{\m}(t,\zz))\right)=0,\qquad t\in(0,T),\ \zz=(v,x)\in\R^{2},\\
  u^{\m}(T,\zz) = g(\zz,\mu_T),
 \end{cases}
\end{equation}
where $\KK$ is the ultraparabolic operator in $\R^{3}$
\begin{equation}\label{oper1}
  \KK:=\frac{\sigma^2(t,\zz,\mu_t)}{2}\partial_{vv} + v\partial_x+\partial_t.
\end{equation}
Problems \eqref{linFBSDE} and \eqref{HJB} are tightly connected. If the HJB problem
admits a classical solution $u^{\m}$, 
then $u^{\m}$ acts as the {\it decoupling field} for the FBSDE \eqref{linFBSDE}: this means (cf.
Definition \ref{decf}) that the solution $(\VX,Y,\BZ)$ of the FBSDE \eqref{linFBSDE} satisfies
  $$Y_t =u^{\m}(t,\VX_t), \qquad \BZ_t = \sigma(t,\VX_t,\mu_t) \partial_v u^{\m}(t,\VX_t).$$
It then follows that the optimal state process $\VX$ is also characterized as the solution of the
forward SDE
\begin{equation}\label{linFSDE1}
 \begin{cases}
  dV_t =b(t,\VX_{t},\mu_t,\hat{a}(t,\VX_t,\mu_t,\p_v u^{\m}(t,\VX_t)))dt+
  \sigma(t,\VX_{t},\mu_t)dW_{t},\\
  dX_t = V_t dt.
 \end{cases}
\end{equation}
In the next subsection, we present a well-posedness result and introduce the functional framework
required for the study of a class of semilinear ultraparabolic problems, encompassing the HJB
equation \eqref{HJB} as a particular case.

\subsection{Semilinear kinetic equations}
We consider the Cauchy problem:
\begin{equation}\label{Cauchy}
 \begin{cases}
  \K u(t,v,x)=\ff\left(t,v,x,u(t,v,x),\partial_v u(t,v,x)\right), &(t,v,x)\in(0,T)\times\R^{2},\\
  u(T,\cdot)=\gg, &\text{on }\R^2.
\end{cases}
\end{equation}
Here, $\K$ denotes the {\it Langevin operator}, defined by
\begin{equation}\label{oper}
  \K:=\mathbf{a}(t,v,x)\partial_{vv} + v\partial_x+\partial_t.
\end{equation}
The HJB problem \eqref{HJB} fits into the framework of \eqref{Cauchy} by fixing a flow $\m$ and
setting
  $$\mathbf{a}(t,v,x)=\frac{1}{2}\s^{2}(t,v,x,\m_{t}),\quad \ff(t,v,x,u,p)=
  -H\left(t,v,x,\mu_t,p,\hat{a}(t,v,x,\mu_t,p)\right),\quad \gg(v,x)=g(v,x,\m_{T}).$$
To the best of our knowledge, the degenerate Cauchy problem \eqref{Cauchy} in this level of
generality has not been treated in the literature. Related, though distinct, contributions that
exploit the hypoelliptic structure of the operator include \cite{MR1941093,MR2048401}, and more
recently \cite{MR4717764,anceschi2024}.

As in the classical parabolic theory, the study of~\eqref{Cauchy} relies on a priori estimates in
appropriate functional spaces. Over the last few decades, significant advances have been made in
the regularity theory for kinetic operators, and distinct notions of H\"older regularity have been
proposed in the literature. Two main streams of research can be distinguished: one is based on the
notion of \emph{intrinsic} H\"older continuity, first introduced in~\cite{MR1386366,Manfredini},
whereas the other relies on \emph{anisotropic} H\"older spaces, defined in~\cite{Lunardi1997}
(see~\cite{MR4660246} for a comparison between these two frameworks). Several sharp Schauder
estimates have recently been established using these definitions (see,
e.g.,~\cite{MR4676647,MR4924753,MR4696151}). The present analysis relies on the results
in~\cite{MR4660246,MR3429628,Luce2023} which provide \emph{optimal} Schauder estimates, ensuring
the strongest H\"older regularity under the weakest assumptions on the coefficients. This choice
is further motivated in Remark~\ref{r1}.

To streamline the presentation, we state our main result first and postpone the detailed
definition of the anisotropic and intrinsic H\"older spaces, denoted by $C^{\a}_{\K,T}$ and
$\C^{\a}_{\K,T}$ respectively, to the end of this section.

\begin{assumption}\label{ass7}\hfill
\begin{enumerate}
\item The diffusion coefficient is uniformly elliptic and anisotropically H\"older continuous
  in $z=(v,x)$, uniformly with respect to $t$, i.e. there exists a positive constant $\k$ such that
  $\k^{-1}\le \mathbf{a}(t,z)\le \k$ for all $(t,z)\in(0,T)\times\R^{2}$, and
  $\mathbf{a}\in C^{\bar{\a}}_{\K,T}$ for some $\bar{\a}\in(0,1]$.
  \item The coefficients satisfy the growth and Lipschitz conditions
\begin{align}
   |\ff(t,z,u,p)|
   &\lesssim 1+|(z,u,p)|,\qquad
   |\gg(z)|\lesssim 1,\\
   |\ff(t,z,u,p)-\ff(t,z',u',p')|
   &\lesssim \left(1+|p|\right)\left(|z-z'|+|u-u'|+|p-p'|\right),\\
   |\gg(z)-\gg(z')|&\lesssim|z-z'|,
\end{align}
for any $z,z'\in\R^{2}$, $u,u',p,p'\in\R$ and $t\in(0,T)$.
\end{enumerate}
\end{assumption}

\begin{theorem}\label{t1}
Under Assumption \ref{ass7}, the Cauchy problem \eqref{Cauchy} admits a unique classical solution
$u\in \C^{2+\a}_{\K,t}$
for any $\a\in(0,\bar{\a})$ and $t\in[0,T)$, 
such that
\begin{align}\label{est1}
  |u(t,z)-u(t,z')|\lesssim |z-z'|,\qquad
  |u(t,z)-u(t',z)|\lesssim \sqrt{|t-t'|}\left(1+|z|\right),
\end{align}
for any $z,z'\in\R^{2}$ and $t,t'\in(0,T)$.
\end{theorem}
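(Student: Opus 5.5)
We follow the strategy of \cite{MR2048401}, adapted to the kinetic setting: first obtain a priori Lipschitz and time-regularity bounds \eqref{est1} by probabilistic (BSDE) arguments, then use the optimal Schauder estimates of \cite{MR4660246,MR3429628,Luce2023} to upgrade to $\C^{2+\a}_{\K,t}$ regularity, and close with an approximation/compactness argument. The first step is to \emph{regularize}. Mollify $\ff$ and $\gg$, and truncate the factor $(1+|p|)$ in the Lipschitz bound of $\ff$. Concretely, replace $\ff(t,z,u,p)$ by $\ff_n(t,z,u,\pi_n(p))$, where $\pi_n$ is a smooth cutoff at level $n$. Then $\ff_n$ is globally Lipschitz in $(z,u,p)$. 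For globally Lipschitz data, the Markovian BSDE
\begin{equation}
  Y_s=\gg(\VX^{t,z}_T)-\int_s^T \ff_n(r,\VX^{t,z}_r,Y_r,\tfrac{1}{\sqrt{2\mathbf{a}}}Z_r)\,dr-\int_s^T Z_r\,dW_r
\end{equation}
is well posed. Here $\VX^{t,z}$ solves the uncontrolled Langevin SDE $dV=\sqrt{2\mathbf{a}}\,dW$, $dX=V\,dt$, started at $z$ at time $t$. Weak well-posedness of this SDE holds by uniform ellipticity and H\"older continuity of $\mathbf{a}$, using the fundamental solution of $\K$ from the ultraparabolic theory. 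We set $u_n(t,z):=Y_t^{t,z}$.

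Second, we prove estimates \eqref{est1} for $u_n$ \emph{uniformly in $n$}. For the Lipschitz bound in $z$ we do not differentiate the flow, since $\mathbf{a}$ is only H\"older. Instead we exploit the kinetic structure, following \cite{MR2048401}. We represent $u_n$ through the fundamental solution $\Gamma$ of $\K$:
\[
u_n(t,z)=\int \Gamma(t,z;T,\zeta)\gg(\zeta)\,d\zeta-\int_t^T\!\!\int\Gamma(t,z;s,\zeta)\,\ff_n(s,\zeta,u_n,\partial_v u_n)\,d\zeta\, ds .
\]
We then use the Gaussian bounds on $\partial_v\Gamma$ and $\partial_x\Gamma$. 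These have singularities $(s-t)^{-1/2}$ and $(s-t)^{-3/2}$ respectively, and the latter is compensated by the Lipschitz continuity of $\gg$ through a translation argument. Combined with the linear growth of $\ff$, a Gronwall-type argument in weighted sup norms yields $\|\partial_v u_n\|_\infty+\|u_n\|_{\Lip}\lesssim 1$ independently of $n$. The crucial point is that the bound $|\ff|\lesssim 1+|(z,u,p)|$ is only \emph{linear} in $p$, so the $(1+|p|)$ factor never enters the gradient estimate. Once $|\partial_v u_n|\le M$ with $M$ structural, choosing $n>M$ shows that the truncation is inactive. The time estimate follows from $u_n(t,z)-u_n(t',z)=E[u_n(t',\VX^{t,z}_{t'})]-u_n(t',z)+O(|t-t'|)$, using the Lipschitz bound together with $E|\VX^{t,z}_{t'}-z|\lesssim\sqrt{|t-t'|}(1+|z|)$. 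The drift $v\,dt$ contributes $|v||t-t'|$, which explains the factor $(1+|z|)$.

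Third, we bootstrap regularity. With $u_n$ Lipschitz in $z$ and $1/2$-H\"older in $t$, the right-hand side $\ff_n(\cdot,u_n,\partial_v u_n)$ is bounded. The intrinsic $C^{1+\a}$-type estimates for the linear problem then make $\partial_v u_n$ intrinsically H\"older. As a result, $\ff_n(\cdot,u_n,\partial_v u_n)\in C^{\a}_{\K}$ on $[0,t]$ for $t<T$, with $\a<\bar\a$ forced by the regularity of $\mathbf{a}$. The optimal Schauder estimates then give $u_n\in\C^{2+\a}_{\K,t}$ uniformly in $n$. The restriction $t<T$ is needed because $\gg$ is only Lipschitz, so the norms blow up as $t\to T$. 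By Arzel\`a--Ascoli on compacts, a subsequence converges to a classical solution $u$ of \eqref{Cauchy} satisfying \eqref{est1}.

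Finally, uniqueness in the class of functions with bounded $\partial_v u$ follows by a comparison principle. Given two solutions $u_1,u_2$, their difference solves a linear equation $\K w=c\,w+d\,\partial_v w$ with bounded coefficients $c,d$; here boundedness uses the Lipschitz bound on $\ff$ with $|p|\le M$. Equivalently, by It\^o's formula both solutions coincide with the unique solution of the Lipschitz BSDE. We expect the main obstacle to be the uniform Lipschitz estimate in the second step, specifically controlling the $x$-derivative of $u$, where the fundamental solution carries the stronger singularity $(s-t)^{-3/2}$.
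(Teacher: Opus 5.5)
The paper does not prove Theorem~\ref{t1}: it only refers to an adaptation of \cite{MR2048401} and defers details to \cite{pas_ron}. Your architecture (a priori Lipschitz and $\frac12$-H\"older-in-time bounds, Schauder upgrade, compactness, uniqueness via It\^o's formula) is in that spirit. The decisive step, however, fails: the uniform Lipschitz bound in $x$ in your second step.

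Your mechanism for it does not work, for three reasons.
\begin{enumerate}
\item Theorem~\ref{t11} and the parametrix theory give Gaussian bounds only for $\partial_v\Gamma$ and $\partial_{vv}\Gamma$. For $\mathbf a$ merely in $C^{\bar\alpha}_{\K,T}$ there is no $x$-derivative estimate for $\Gamma$ (cf.\ Remark~\ref{r1}).
\item The ``translation argument'' needs $\Gamma(t,z;s,\zeta)$ to depend only on $\zeta-(v,x+(s-t)v)$, which holds only for coefficients independent of $z$.
\item Even granting $|\partial_x\Gamma|\lesssim(s-t)^{-3/2}\Gamma^{\lambda}$, the source term is not integrable in $s$ unless $\zeta\mapsto\ff_n(s,\zeta,u_n,\partial_v u_n)$ has anisotropic regularity of order $>1$ uniformly in $n$. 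Lipschitz regularity only gives $\int_t^T(s-t)^{-1}ds$, and Lipschitz data cannot provide more.
\end{enumerate}

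In fact no argument can close this step under Assumption~\ref{ass7} alone when $\bar\alpha<1$. Take $\ff\equiv0$, $\gg(v,x)=-\cos v$ and $\mathbf a(t,v,x)=1+\frac12\min\{|x|^{\beta},1\}$ with $\beta=\bar\alpha/3$. By It\^o's formula, any classical solution satisfies $u(t,0,x)=-1+E\int_t^T\mathbf a(X_s)\cos V_s\,ds$, where $(V,X)$ starts from $(0,x)$ at time $t$. Set $\tau=T-t$ and $h=M\tau^{3/2}$. Using $E|V_s|\lesssim\tau^{1/2}$, $E|X_s-x|\lesssim\tau^{3/2}$, $0\le 1-\cos V\le|V|$ and Jensen's inequality, one gets $u(t,0,h)-u(t,0,0)\ge\frac14\tau h^{\beta}-C\tau^{3/2}$ for $M$ large. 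The $x$-difference quotient is then at least $\frac14M^{\beta-1}\tau^{-(1-\bar\alpha)/2}-C/M$, which tends to infinity as $t\to T$.

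So an $x$-Lipschitz bound needs extra regularity of $\mathbf a$ in $x$, together with an argument that uses it. An example is the Lipschitz continuity of $\sigma$ in Assumption~\ref{ass6}, which is where Theorem~\ref{t1} is actually applied. One possible argument: differentiate the mollified problem in $x$, and absorb the new source $-\partial_x\mathbf a\,\partial_{vv}u$ using the integrable blow-up of $\partial_{vv}u$ near $T$.

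A second, smaller issue is your ``crucial point'' that only the growth of $\ff$ enters; this is not correct. Since $\ff$ grows linearly in $z$, a Gronwall argument built on $|\partial_v\Gamma|\lesssim(s-t)^{-1/2}\Gamma^{\lambda}$ and the growth bound yields only the weighted estimate $|\partial_v u_n(t,z)|\lesssim 1+|z|$, not $\|\partial_v u_n\|_\infty\lesssim 1$. Any unweighted bound must use the Lipschitz continuity of $\ff$ in $z$, via the cancellation $\int\partial_v\Gamma\,d\zeta=0$. There the factor $(1+|p|)$ and the modulus of continuity of $\partial_v u_n$ do enter.

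As a consequence, you have not shown that the truncation is inactive globally. This is repairable by localization: the equation is local, so $u_n$ solves the untruncated equation on $\{|z|\le R\}$ once $n$ exceeds a fixed multiple of $1+R$. Likewise, in your third step $\ff_n(\cdot,u_n,\partial_v u_n)$ has linear growth rather than being bounded. You therefore need local Schauder estimates instead of the global norms of $C^{\alpha}_{\K,T}$.
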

Theorem \ref{t1} may be proved by a suitable adaptation of the methods developed in
\cite{MR2048401}; we defer the complete proof to forthcoming work \cite{pas_ron}.

We proceed by recalling the definitions relevant to our setting. In \cite{LanconelliPolidoro1994}
it was shown that, when the diffusion coefficient $\mathbf{a}$ is constant, $\K$ is invariant with
respect to the left translations in the Lie group $(\R^{3},\circ)$ with composition law defined by
  $$(t',v',x')\circ(t,v,x)=(t+t',v+v',x+x'+tv');$$
moreover, $\K$ is homogeneous of degree two with respect to the dilations group
 $$\d_{\l}(t,v,x)=(\l^{2}t,\l v,\l^{3}x).$$
It is then natural to consider the {\it anisotropic norm}
 $$|(v,x)|_{\K}=|v|+|x|^{\frac{1}{3}},\qquad (v,x)\in\R^{2}.$$
\begin{definition}[\bf Anisotropic H\"older spaces]\label{anisotropic}
We set
  $$C^{\a}_{\K,T}:=L^{\infty}((0,T);C^{\a}_{\K}),\qquad \a\in(0,3],$$
where $C^{\a}_{\K}$ is the anisotropic H\"older space consisting of all measurable functions
$\gg:\R^{2}\to\R$ for which the following norm is finite:
\begin{align}
 \|\gg\|_{C^{\a}_{\K}}:=
 \begin{cases}
  \|\gg\|_{L^{\infty}(\R^{2})}+
  \sup\limits_{(v,x),(v',x')\in\R^{2}}\frac{|\gg(v,x) - \gg(v',x')|}{|(v-v',x-x')|_{\K}^{\a}},
  &\a\in(0,1],\\
 \|\gg\|_{L^{\infty}(\R^{2})}+\|\p_{v}\gg\|_{C^{\a-1}_{\K}}+
 \sup\limits_{v,x,x'\in\R}
 \frac{|\gg(v,x) - \gg(v,x')|}{|x-x'|^{\frac{\a}{3}}},
 &\a\in(1,3].
 \end{cases}
 \end{align}
\end{definition}
A crucial component of the definition of {\it intrinsic} regularity is the regularity along the
drift vector field
\begin{equation}\label{Y}
 \Y:=v\p_{x}+\p_{t}
\end{equation}
of $\K$: for any function $\ff$ defined on $(0,T)\times\R^{2}$ we set
  $$\|\ff\|_{\C^{\a}_{\Y,T}}:=\sup_{(t,v,x)\in (0,T)\times\R^{2}}\sup_{s\in(t,T)}\frac{|\ff(s,v,x+(s-t)v)-
  \ff(t,v,x)|}{|s-t|^{\frac{\a}{2}}},
  \qquad \a\in(0,2].$$
Following \cite{MR4660246}, we introduce the intrinsic H\"older spaces.
\begin{definition}[\bf Intrinsic H\"older spaces]
For $\a\in(0,3]$, the intrinsic H\"older space $\C^{\a}_{\K,T}$ is defined by the norm
\begin{align}
  \|\ff\|_{\C^{\a}_{\K,T}}:=\begin{cases}
  \|\ff\|_{C^{\a}_{\K,T}}
  +\|\ff\|_{\C^{\a}_{\Y,T}},&\hspace{0pt}\quad \a\in(0,1],\\
 \|\ff\|_{C^{\a}_{\K,T}}+\|\p_{v}\ff\|_{C^{\a-1}_{\K,T}}+\|\ff\|_{\C^{\a}_{\Y,T}},&\hspace{0pt}\quad \a\in(1,2],\\
  \|\ff\|_{C^{\a}_{\K,T}}+\|\p_{v}\ff\|_{C^{\a-1}_{\K,T}}+\| {\Y\ff}
 \|_{C^{\a-1}_{\K,T}}, &\hspace{0pt}\quad \a\in(2,3].
  \end{cases}
\end{align}
Higher order intrinsic H\"older spaces are defined recursively.
\end{definition}
\begin{remark}\label{r1}
Intrinsic regularity is stronger than anisotropic regularity; in particular, we have the strict
inclusion $$\C^{\a}_{\K,T}\subsetneq C^{\a}_{\K,T}.$$ Moreover, if $u\in \C^{2}_{\K,T}$, then the
Euclidean derivatives $\partial_v u$, $\partial_{vv}u$ and the directional derivative $\Y u$ exist
in the classical sense. Therefore, Theorem~\ref{t1} provides a classical solution to the Cauchy
problem~\eqref{Cauchy} under the sole assumption that the coefficients are anisotropically regular
(in particular, they may be merely measurable in time).

Even more importantly, for $u\in \C^{2}_{\K,T}$ the It\^o formula holds, which is the fundamental
tool in the analysis of the associated stochastic optimal control problem. By contrast, if $u\in
C^{2}_{\K,T}$, then $\K u$ can, in general, be interpreted only in a weak sense. We notice
explicitly that the existence of the Euclidean derivatives $\partial_x u$ and $\partial_t u$ is
ensured only under the stronger regularity assumption $u\in \C^{3}_{\K,T}$.
\end{remark}

\subsection{McKean-Vlasov optimal control problem}
In this section we introduce the standing assumptions on the coefficients and state our main
result on the well-posedness of the kinetic McKean-Vlasov optimal control problem \eqref{MKVCP}.

\begin{assumption}\label{ass1}
There exists positive constants $\k$ and $\bar{\a}\in(0,1]$, such that the following conditions
hold for any $t\in[0,T]$, $\zz,\zz'\in\times\R^{2}$, $\m\in\mathcal{P}_2(\R^{2})$ and $a,a'\in A$.
\begin{enumerate}
\item \emph{Non-degeneracy and growth conditions:}
\begin{align}
  &\k^{-1}\le \s(t,\zz,\m)\le\k,\qquad &|g(\zz,\m)|\lesssim 1,\\
  &|b(t,\zz,\m,a)|\lesssim 1+|a|,\qquad &|\p_a b(t,\zz,\m,a)|\lesssim 1,\\
  &|f(t,\zz,\m,a)|\lesssim 1+|a|^{2},\qquad &|\p_{a} f(t,\zz,\m,a)|\lesssim 1+|a|.
\end{align}
\item \emph{Regularity:} the diffusion coefficient $\s$ is differentiable in $v$ and we have
  $$|\p_{v} \s(t,\zz)-\p_{v} \s(t,\zz')|\lesssim |\zz-\zz'|^{\bar{\a}}_{\K},\qquad |\s(t,\zz,\m)-\s(t,\zz',\m)|+|g(\zz,\m)-g(\zz',\m)|\lesssim |\zz-\zz'|.$$
Moreover, $b,f$ are differentiable in $v,a$ and satisfy
\begin{equation}\label{and4}
\begin{split}
  |b(t,\zz,\m,a)-b(t,\zz',\m,a)|+|f(t,\zz,\m,a)-f(t,\zz',\m,a)|&\lesssim |\zz-\zz'|,\\
  |\p_{v} b(t,\zz,\m,a)-\p_{v} b(t,\zz',\m,a')|+|\p_{v} f(t,\zz,\m,a)-\p_{v} f(t,\zz',\m,a')|&\lesssim |\zz-\zz'|^{\bar{\a}}_{\K}+|a-a'|^{\bar{\a}},\\
  |\p_a b(t,\zz,\m,a)-\p_a b(t,\zz',\m,a')|&\lesssim |\zz-\zz'|^{\bar{\a}}_{\K}+|a-a'|^{\bar{\a}},\\
  |\p_a f(t,\zz,\m,a)-\p_a f(t,\zz',\m,a')|&\lesssim |\zz-\zz'|^{\bar{\a}}_{\K}+(1+|a|)|a-a'|^{\bar{\a}}.
\end{split}
\end{equation}
{The minimizer $\hat{a}=\hat{a}(t,\zz,\m,p_{v})$ is differentiable in $v,p_{v}$ and satisfies}
\begin{align}
  |\p_{v}\hat{a}(t,\zz,\m,p_{v})-\p_{v}\hat{a}(t,\zz',\m,p_{v}')|&\lesssim
  |\zz-\zz'|^{\bar{\a}}_{\K}+|p_{v}-p_{v}'|,\\
  |\p_{p_{v}}\hat{a}(t,\zz,\m,p_{v})-\p_{p_{v}}\hat{a}(t,\zz',\m,p_{v}')|&\lesssim |\zz-\zz'|^{\bar{\a}}_{\K}+
  |p_{v}-p_{v}'|^{\bar{\a}}.
\end{align}
  \item \emph{Lipschitz continuity in the measure argument:}
\begin{equation}\label{w1}
\begin{split}
  |\s(t,\zz,\m)-\s(t,\zz,\m')|+|g(\zz,\m)-g(\zz,\m')|&\lesssim\mathcal{W}_{1}(\m,\m'),\\
  |f(t,\zz,\m,a)-f(t,\zz,\m',a)|+|b(t,\zz,\m,a)-b(t,\zz,\m',a)|&\lesssim\mathcal{W}_{1}(\m,\m').
\end{split}
\end{equation}
\end{enumerate}
\end{assumption}
The main theoretical result of the paper is the following
\begin{theorem}[\bf Well-posedness for the McKean-Vlasov optimal control problem \eqref{MKVCP}]\label{th3}
Under Assumptions \ref{ass2} and \ref{ass1}, for any $\eta\in L^2(\O,\mathcal{F}_0, P)$ there
exists a unique strong solution $(\VX^{\y},Y^{\y},\BZ^{\y})$ to the McKean-Vlasov FBSDE
\begin{equation}\label{MKVFBSDE}
    \begin{cases}
  dV_{t}=b\left(t,\VX_{t},[\VX_{t}],\hat{a}(t,\VX_t,[\VX_{t}],\sigma^{-1}(t,\VX_t,[\VX_{t}])\BZ_t)\right)dt+\sigma(t,\VX_{t},[\VX_{t}])dW_{t},\\
  dX_{t}=V_{t}dt,\\
  dY_t = - f\left(t,\VX_t,[\VX_{t}],\hat{a}(t,\VX_t,[\VX_{t}],\sigma^{-1}(t,\VX_t,[\VX_{t}])\BZ_t)\right)dt + \BZ_t dW_{t},\\
  \VX_{0}= \eta,\quad Y_T = g(\VX_T,[\VX_{T}]).
\end{cases}
\end{equation}
Moreover:
\begin{itemize}
  \item there exists a continuous function $u\in\Lip_{0,T}(\R^{2})\cap\C^{3+\a}_{\K,t}$
  for any $\a\in(0,\bar{\a})$ and $t\in[0,T)$, such that the processes $\left(u(t,\VX^{\y}_t)\right)_{t\in[0,T]}$ and $Y^{\y}$ are indistin\-guishable;
  \item the pair $(\VX^{\y},\hat{a})$, with $\hat{a}_t :=\hat{a}(t,\VX^{\y}_t,[\VX^{\y}_t],\s^{-1}(t,\VX^{\y}_t,[\VX^{\y}_t])\BZ^{\y}_t)$,
  is the solution of the McKean-Vlasov optimal control problem \eqref{MKVCP}. In particular,
    $$J(\hat{a})=E[u(0,\eta)].$$
\end{itemize}
\end{theorem}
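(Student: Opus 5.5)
The proof follows the fixed-point strategy outlined at the beginning of Section~\ref{sec2}. We split it into three stages: (i) the frozen problem for a fixed flow $\m\in C([0,T],\mathcal{P}_2(\R^2))$; (ii) a contraction argument for the map $\Phi$; (iii) identification of the fixed point with the optimal control of \eqref{MKVCP}.

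\emph{Stage (i): the frozen problem.} Fix $\m$. Define $\mathbf{a}=\frac12\s^2(\cdot,\m_\cdot)$, $\ff=-H(\cdot,\m_\cdot,p,\hat a(\cdot,\m_\cdot,p))$ and $\gg=g(\cdot,\m_T)$. We first check Assumption~\ref{ass7} from Assumptions~\ref{ass2} and~\ref{ass1}.
\begin{itemize}
\item Ellipticity of $\mathbf{a}$ and its anisotropic H\"older continuity follow from the bounds and regularity of $\s$.
\item The growth bound $|\ff|\lesssim 1+|p|$ in the required linear form comes from $|b|\lesssim1+|a|$, $|f|\lesssim 1+|a|^2$ and $|\hat a|\lesssim|p|$. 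A naive bound gives quadratic growth $|p|^2$. We control it using the a priori gradient bound: since $\gg$ is Lipschitz, Theorem~\ref{t1} gives $|\p_v u|\lesssim 1$, so we may truncate $\ff$ in $p$ at a level $R$ exceeding that bound.
\item The local Lipschitz condition with factor $(1+|p|)$ follows from the Lipschitz bounds \eqref{and4}, from \eqref{w11}, and from the envelope identity $\p_p H(\cdot,p,\hat a)=b(\cdot,\hat a)$, which holds since $\hat a$ minimizes $H$.
\end{itemize}
Theorem~\ref{t1} then yields $u^\m\in\C^{2+\a}_{\K,t}$, Lipschitz in $z$ uniformly in $t$ and $\m$, with $|\p_v u^\m|\lesssim1$.

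We upgrade to $\C^{3+\a}$ by differentiating the equation in $v$. The function $w=\p_v u^\m$ solves a linear kinetic equation whose right-hand side contains $-\p_x u^\m$ (from the commutator $[\p_v,v\p_x]=\p_x$). Its coefficients are H\"older by the differentiability assumptions on $\s,b,f,\hat a$ in Assumption~\ref{ass1}. The optimal Schauder estimates of \cite{MR4660246,Luce2023} then apply, using that $\p_x u^\m$ is bounded and anisotropically regular. A consequence is that $z\mapsto b(t,z,\m_t,\hat a(t,z,\m_t,\p_v u^\m))$ is Lipschitz, so \eqref{linFSDE1} has a unique strong solution $\VX^\m$.

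Optimality and the decoupling property come from a verification argument using It\^o's formula, which is valid for $u^\m\in\C^2_{\K}$ by Remark~\ref{r1}. For any $a\in\mathbb{A}$ we have $J^\m(a)\ge E[u^\m(0,\eta)]$, with equality for the feedback $\hat a$. Setting $Y=u^\m(t,\VX^\m_t)$ and $\BZ=\s\p_v u^\m$ produces a solution of \eqref{linFBSDE}. Uniqueness of the optimal control follows from uniqueness of the minimizer in Assumption~\ref{ass2}.

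\emph{Stage (ii): contraction.} Take two flows $\m,\m'$. We must bound
$$\sup_{s\le t}E|\VX^\m_s-\VX^{\m'}_s|^2\lesssim\int_0^t\left(\sup_{r\le s}\mathcal{W}_1(\m_r,\m'_r)\right)^2ds.$$
This needs two ingredients:
\begin{itemize}
\item a forward Gronwall estimate, using the Lipschitz bounds in $z$ and in $\m$ (from \eqref{w1} and \eqref{w11});
\item the key estimate of \emph{continuous dependence} of the feedback on the flow, namely $\|\p_v u^\m-\p_v u^{\m'}\|_\infty\lesssim\sup_s\mathcal{W}_1(\m_s,\m'_s)$, or a time-weighted version of it.
\end{itemize}
Since $\mathcal{W}_1\le\mathcal{W}_2$ and $\mathcal{W}_2^2([\VX^\m_t],[\VX^{\m'}_t])\le E|\VX^\m_t-\VX^{\m'}_t|^2$, iterating the estimate or using an exponentially weighted sup-metric $\sup_t e^{-\l t}\mathcal{W}_2(\m_t,\m'_t)$ makes $\Phi$ a contraction. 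Continuity of $t\mapsto[\VX^\m_t]$ in $\mathcal{W}_2$ is standard.

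\emph{Stage (iii): the fixed point.} Let $\bar\m$ be the fixed point. Then $(\VX^{\bar\m},Y,\BZ)$ solves \eqref{MKVFBSDE}, and $u=u^{\bar\m}$ has the stated regularity. Optimality for \eqref{MKVCP} with $J(\hat a)=E[u(0,\eta)]$ follows from Stage (i) applied to $\bar\m$, since the coefficients coincide along the fixed point. Uniqueness of the solution of \eqref{MKVFBSDE} follows because any solution has law flow $\m$ with $Y=u^{\m}(\cdot,\VX)$, by uniqueness for the frozen FBSDE via its decoupling field, so $\m$ is a fixed point of $\Phi$ and therefore equals $\bar\m$.

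\emph{Main obstacle.} The hardest step is the continuous-dependence estimate for $\p_v u^\m$ in $\m$, which is needed in Stage (ii). The plan is to write the linear kinetic equation satisfied by $\d u=u^\m-u^{\m'}$:
$$\K^{\m}\d u=\tfrac12(\s^2_{\m'}-\s^2_{\m})\p_{vv}u^{\m'}+\left(\ff^{\m}(\cdot,\p_v u^\m)-\ff^{\m'}(\cdot,\p_v u^{\m'})\right).$$
The difference of the nonlinear terms is linearized in $\p_v\d u$ with bounded coefficients, and the forcing is of size $\mathcal{W}_1$, measurable in time. We then apply gradient estimates for the fundamental solution of the kinetic operator ($|\p_v\Gamma|$ integrates like $(s-t)^{-1/2}$), following \cite{MR2352998} in the ultraparabolic setting. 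This gives $|\p_v\d u(t)|\lesssim\int_t^T(s-t)^{-1/2}\mathcal{W}_1(\m_s,\m'_s)\,ds$ plus the terminal contribution. The delicate point is the term with $\p_{vv}u^{\m'}$, which blows up near $T$. We handle it either by absorbing it through the singular-integral structure or by moving the $v$-derivative onto $\Gamma$ via integration by parts in $v$.
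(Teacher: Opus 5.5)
Your Stage (i) parallels the paper (Theorems~\ref{th2} and~\ref{th4}). Stage (ii), however, takes a different route, and as written it does not close.

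\textbf{What the paper does.} It never couples $\mathbf{Z}^{\mu}$ and $\mathbf{Z}^{\nu}$ pathwise. It works in $\mathcal{W}_1$ and uses Kantorovich--Rubinstein \eqref{KR} with the duality identity \eqref{e100}. For $1$-Lipschitz $\varphi$, this bounds $\int\varphi\,d([\mathbf{Z}^{\mu}_s]-[\mathbf{Z}^{\nu}_s])$ by $\int_0^s\|(\mathcal{A}_{\mu}-\mathcal{A}_{\nu})\overleftarrow{P}^{\nu}_{t,s}\varphi\|_{\infty}\,dt$, where $\overleftarrow{P}^{\nu}_{t,s}$ is the pull-back operator of $\mathcal{A}_{\nu}+\mathcal{Y}$. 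It then needs only two things: sup-norm bounds on $\sigma^2(\cdot,\mu)-\sigma^2(\cdot,\nu)$ and $\beta(\cdot,\mu)-\beta(\cdot,\nu)$ in terms of $\mathcal{W}_{1,T}(\mu,\nu)$, and the Gaussian bounds on $\partial_v p^{\nu},\partial_{vv}p^{\nu}$ from Theorem~\ref{t11}. The result is $\mathcal{W}_{1,T}(\Phi\mu,\Phi\nu)\le C\sqrt{T}\,\mathcal{W}_{1,T}(\mu,\nu)$, a contraction for small $T$ only, followed by a continuation step.

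\textbf{First problem: the causal estimate.} The estimate you display, with $\sup_{r\le s}\mathcal{W}_1(\mu_r,\mu'_r)$ on the right, is false. The feedback drift at time $s$ contains $\partial_v u^{\mu}(s,\cdot)$, which depends on $(\mu_r)_{r\in[s,T]}$ and on $g(\cdot,\mu_T)$. Your own continuous-dependence bound shows this: it is an integral over $[t,T]$ plus a terminal term. Since $g$ is only Lipschitz in $z$, that terminal term is only of order $(T-t)^{-1/2}\mathcal{W}_1(\mu_T,\mu'_T)$. Hence $E|\mathbf{Z}^{\mu}_t-\mathbf{Z}^{\mu'}_t|^2$ is controlled only by the full-horizon distance $\mathcal{W}_{1,T}(\mu,\mu')$. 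Neither Picard iteration nor a weight $e^{-\lambda t}$ can then make $\Phi$ a contraction on an arbitrary interval, because the weight discounts exactly the future times on which the present depends. This is the forward--backward structure of the problem. At best your estimates reproduce the paper's small-time contraction, and reaching a general horizon is a separate step.

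\textbf{Second problem: Lipschitz continuity of the feedback drift.} A synchronous-coupling Gronwall argument needs $z\mapsto\beta(t,z,\mu)=b(t,z,\mu_t,\hat a(t,z,\mu_t,\partial_v u^{\mu}(t,z)))$ to be Lipschitz, in particular in $x$. This is not a consequence of $u^{\mu}\in\mathbf{C}^{3+\alpha}_{\mathcal{L},t}$:
\begin{itemize}
\item that regularity only gives $\partial_v u^{\mu}\in C^{2+\alpha}_{\mathcal{L},t}$, i.e.\ H\"older continuity of order $\frac{2+\alpha}{3}<1$ in $x$;
\item this is exactly why the proof of Theorem~\ref{th2} gets strong well-posedness of the feedback SDE from Chaudru de Raynal's regularization-by-noise result rather than from Lipschitz theory;
\item even the Lipschitz constant in $v$, $\|\partial_{vv}u^{\mu}(t,\cdot)\|_{\infty}$, degenerates like $(T-t)^{-1/2}$.
\end{itemize}
Your route therefore needs an extra bound on $\partial_x\partial_v u^{\mu}$, for instance by differentiating the HJB equation in $x$, and you do not provide one. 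The paper's argument, by contrast, uses the feedback drift only through its sup-norm dependence on the measure and through the Gaussian estimates of Theorem~\ref{t11}. Avoiding any Lipschitz bound on $\beta$ in $z$ is precisely what the $\mathcal{W}_1$/fundamental-solution approach buys over coupling.

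\textbf{Two smaller remarks.} First, the truncation in Stage (i) is circular as stated. The gradient bound that Theorem~\ref{t1} yields for the truncated equation is not shown to be independent of the truncation level. You need a truncation-free Lipschitz bound on $u^{\mu}$, e.g.\ by coupling two initial points under the same control and using that $b,f$ are Lipschitz in $z$ uniformly in $a$. Second, your Stage (iii) concludes optimality for \eqref{MKVCP} exactly as the paper does, through Theorem~\ref{th4} at the fixed point. Be aware that this compares $\hat a$ only with controls run against the frozen flow $\bar\mu$. A competing $a\in\mathbb{A}$ also changes $[\mathbf{Z}^{a}_t]$, so the fact that the coefficients coincide along the fixed point does not by itself give $J(a)\ge J(\hat a)$.
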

\begin{remark}\label{r10}
Even if we restrict our attention to square-integrable solutions, in \eqref{w11}-\eqref{w1} we
require Lipschitz continuity with respect to the weaker norm $\mathcal{W}_{1}$, rather than
$\mathcal{W}_{2}$. This choice allows us to exploit the well-known Kantorovich-Rubinstein duality
(cf. \eqref{KR}), which provides linear dual estimates well suited for stability arguments. This
setting is also motivated by the use of a standard contraction argument to obtain a {\it unique}
fixed point, as opposed to more general Schauder fixed point results (see, e.g.,
\cite{MR3752669}), which typically yield existence but do not guarantee uniqueness.
\end{remark}

\section{Numerical tests}\label{sec:numerical-tests}
In this section we describe the stochastic dynamics used in the numerical experiments. We consider
a stylized yet fully consistent model, comprising two exogenous processes calibrated to historical
data for the electricity price and the net household load, coupled with controlled dynamics for
the battery power and the state of charge.
 The numerical experiments provide encouraging
evidence in support of the proposed approach, as the performance under the optimal control policy
compares favorably with that of the uncontrolled benchmark.

\subsection{Electricity price}\label{subsec:electricity_price}
The electricity spot price $S_t$ is modeled as a positive mean-reverting process.
Specifically, the log-price follows an Ornstein-Uhlenbeck (OU) dynamics:
\begin{equation}\label{eq:log-price-ou}
d\log S_t = \kappa^S(t)\bigl(\mu^S(t) - \log S_t\bigr)dt + \sigma^S(t)dW^S_t,
\end{equation}
where $\kappa^S(t)>0$ is a piecewise-constant day/night mean-reversion speed, $\mu^S(t)$ is the
mean-reversion level, and $\sigma^S(t)$ is a piecewise-constant day/night volatility profile. The
explicit solution for constant $\kappa^{S}$ is
\begin{equation}\label{eq:price-solution}
S_t=\exp\!\left( e^{-\kappa^S t}\log S_0 +\int_0^t \kappa^S e^{-\kappa^S(t-u)}\hat{\mu}^S(u)du
+\int_0^t \sigma^S(u)e^{-\kappa^S(t-u)}dW^S_u \right),
\end{equation}
where $\hat{\mu}^S(t):=\mu^S(t)-\frac{(\sigma^S(t))^2}{2\kappa^S}$. Hence, $S_t$ is log-normal
with
\begin{equation}
E[\log S_t]
=
e^{-\kappa^S t}\log S_0 + \int_0^t \kappa^S e^{-\kappa^S(t-u)}\hat{\mu}^S(u)du,\qquad \var(\log
S_t)
=
\int_0^t (\sigma^S(u))^2 e^{-2\kappa^S(t-u)}du.
\end{equation}
The deterministic function $\mu^S(t)$ captures predictable seasonality at multiple time scales
(intraday peaks and weekly cycles) and represents the time-dependent level to which the log-price
reverts. In the numerical implementation, we approximate $\mu^S(\cdot)$ by a degree-$2$
trigonometric polynomial, while $\sigma^S(\cdot)$ and $\kappa^S(\cdot)$ are taken piecewise
constant with two regimes, corresponding to daytime (hours 8-19) and nighttime (all remaining
hours).

The coefficients are calibrated on historical time series of the Italian day-ahead PUN
price\footnote{Daily PUN values and interactive charts are publicly available on the GME web
page:\\ \url{https://www.mercatoelettrico.org/it-it/Home/Esiti/Elettricita/MGP/Esiti/PUN}\\
Source: Gestore dei Mercati Energetici S.p.A.} which represents the reference wholesale price for
electricity in Italy. Figure \ref{fig:pun_price} displays the observed PUN price (red) together
with the 80\% confidence bands generated by the calibrated model for two representative weeks in
June and December 2025.
\begin{figure}[h!]
    \centering
    \includegraphics[width=0.9\textwidth,height=0.2\textheight]{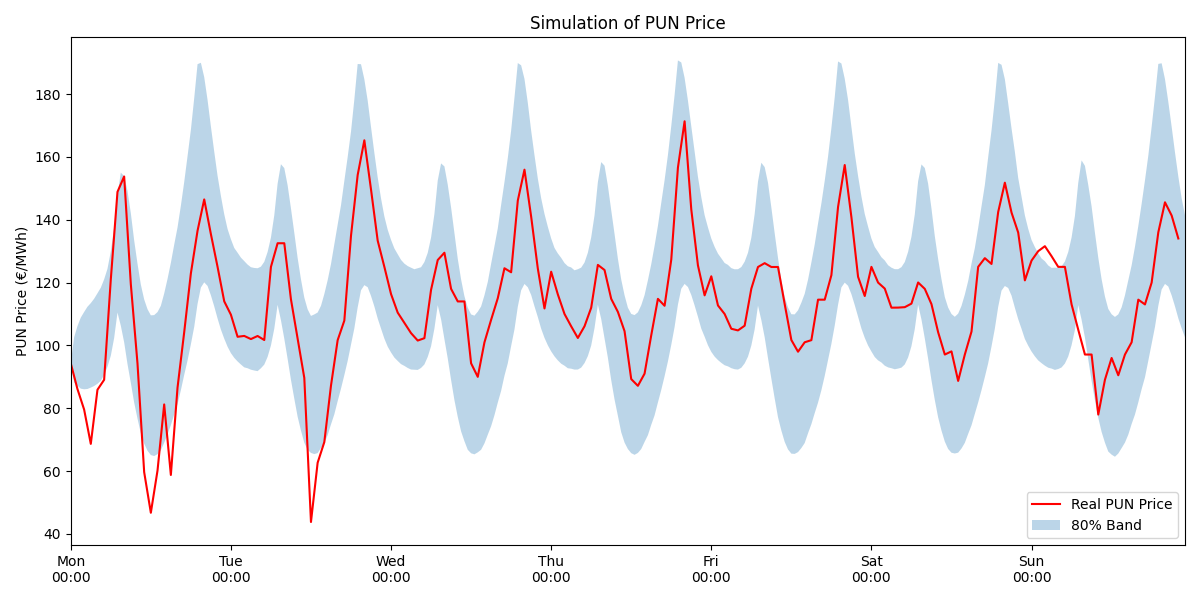}

    \vspace{-0.1cm}

    \includegraphics[width=0.9\textwidth,height=0.2\textheight]{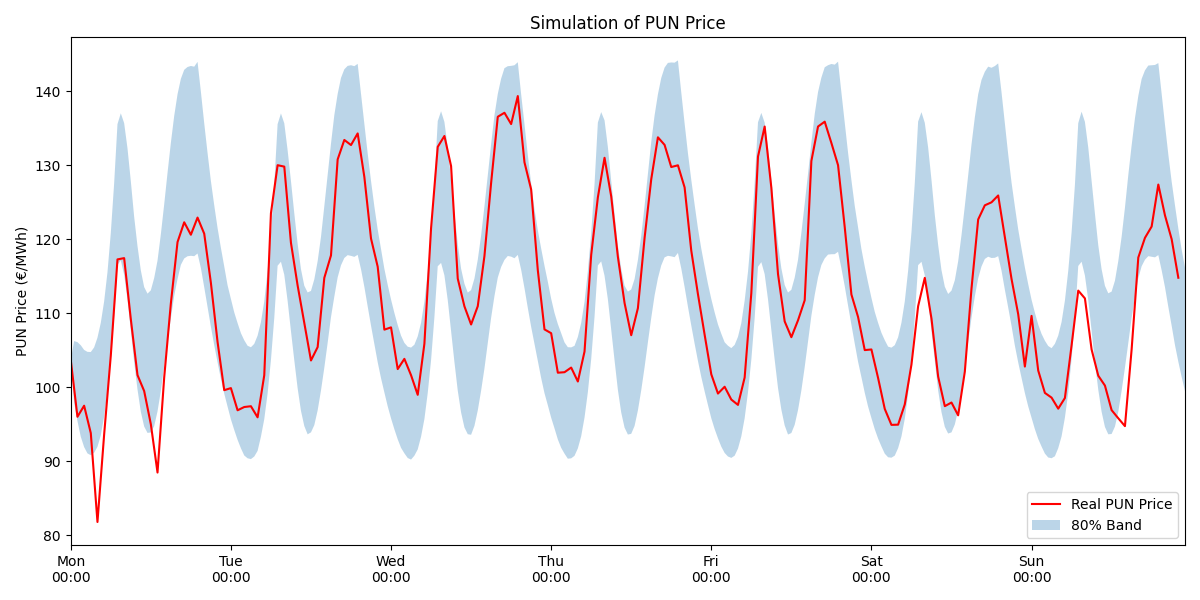}

    \caption{Second week of June 2025 and first week of December 2025: observed PUN price (red) and 80\% confidence bands of the calibrated model (light blue).}
    \label{fig:pun_price}
\end{figure}

\subsection{Net home load}\label{subsec:load}
The net home load $H_t$ is defined as household consumption minus local renewable generation.
Positive values correspond to net withdrawal from the grid, while negative values represent
surplus local production. The net home load is exogenous and modeled as a mean-reverting OU
process
\begin{equation}\label{eq:load-ou}
dH_t=\kappa^H(t)\bigl(\mu^H(t)-H_t\bigr)dt+\sigma^H(t)dW^H_t,
\end{equation}
where $\mu^H(t)$ is a trigonometric polynomial of degree $2$ capturing predictable seasonality,
while $\sigma^H(t),\kappa^H(t)>0$ follow a day/night specification.

We calibrate the model using a historical time series from a residential energy system equipped
with photovoltaic generation and battery storage\footnote{The test data are sourced from the
publicly available SolarEdge Monitoring platform:\\
{\url{https://monitoringpublic.solaredge.com/solaredge-web/p/home}}\\ and refer to a photovoltaic
installation located in Bologna, Italy, with a peak power of 5.16 kWp and a SolarEdge battery
storage system with a capacity of 9.7 kWh.}. Figure \ref{fig:inputs-combined} shows the dataset of
photovoltaic production in green and the household consumption in gray, with the resulting net
home load in red.
\begin{figure}[h!]
    \centering
    \includegraphics[width=0.8\textwidth]{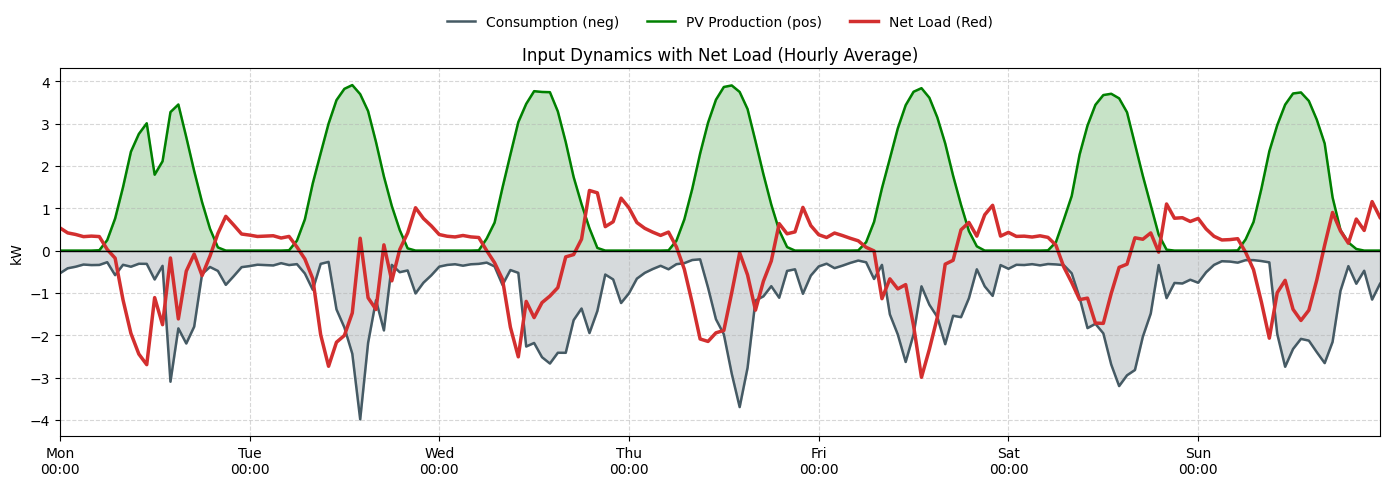}

    \vspace{-0.1cm}

    \includegraphics[width=0.8\textwidth]{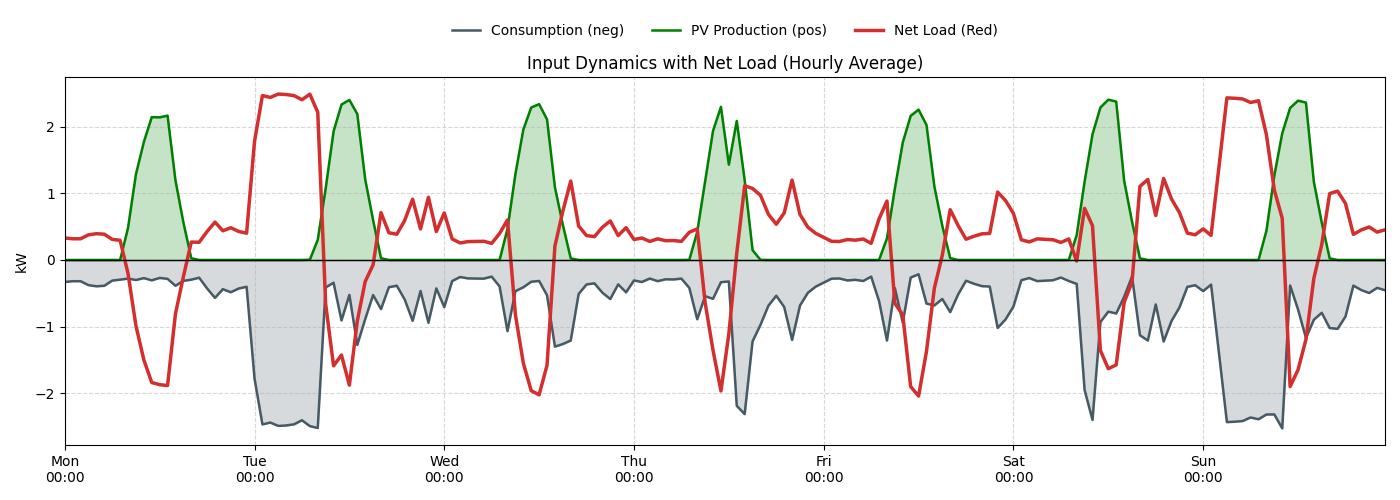}

    \caption{Representative weekly input profiles in 2025 (second week of June and first week of December): household consumption (gray), photovoltaic production (green), and net home load (red).}
    \label{fig:inputs-combined}
\end{figure}
In June, the net home load is visibly more stable, reflecting the relatively high and predictable
photovoltaic generation, whereas in December the production is more volatile, leading to a higher
variability of the net load.

The coefficients of the SDE \eqref{eq:load-ou} are calibrated on these datasets via
regression-based estimation. Figure \ref{fig:net_load} displays the observed net home load (red)
together with the $80\%$ confidence band of the calibrated model. We observe that, in June, net
load is largely driven by photovoltaic generation, whereas in December the observations exhibit
substantially higher variability.
\begin{figure}[h!]
    \centering
    \includegraphics[width=0.9\textwidth,height=0.2\textheight,keepaspectratio=false]{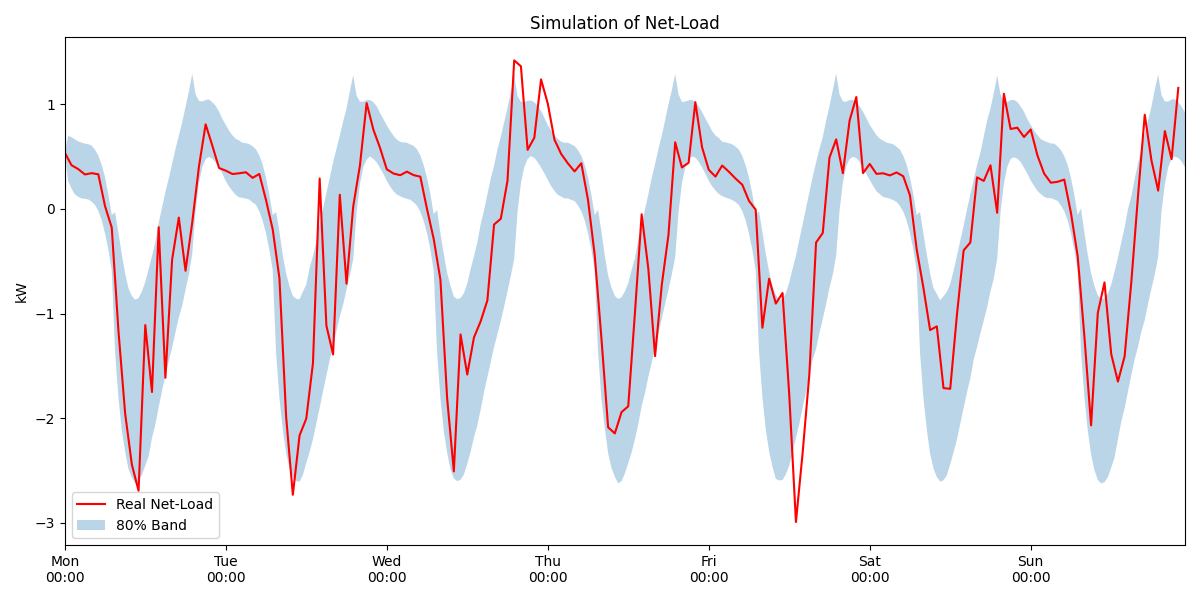}

    \vspace{-0.1cm}

    \includegraphics[width=0.9\textwidth,height=0.2\textheight,keepaspectratio=false]{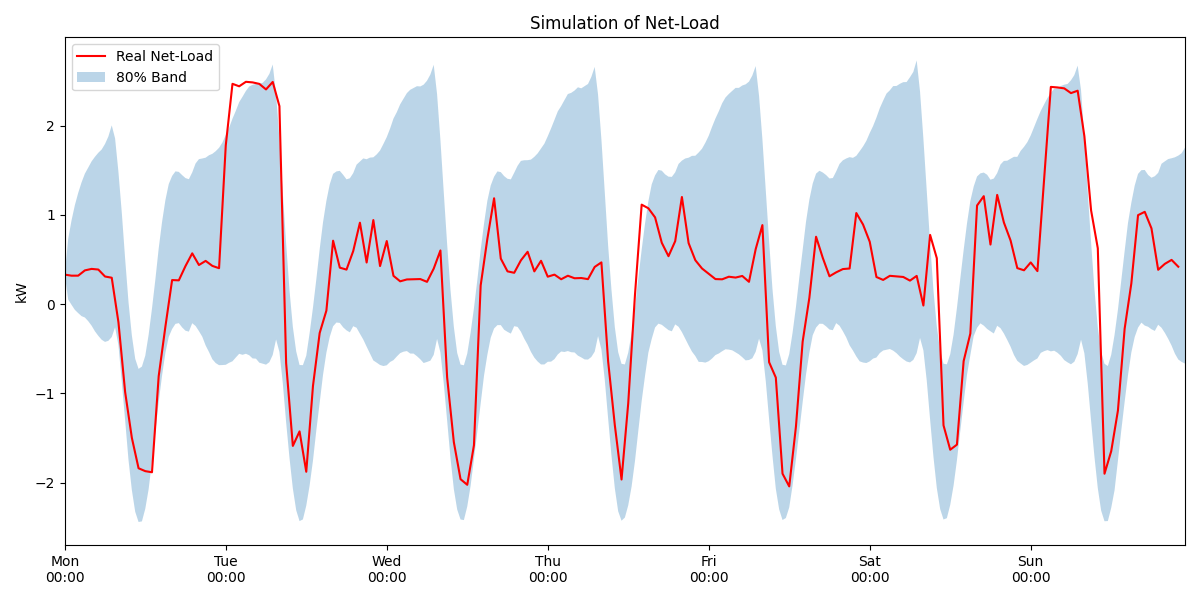}

    \caption{Second week of June 2025 and first week of December 2025: observed net load (red) and $80\%$ confidence bands of the calibrated model (light blue).}
    \label{fig:net_load}
\end{figure}

\subsection{Battery power and cost functional}
We adopt the following dynamics for the battery power $V_t$, with the convention that $V_t>0$
corresponds to charging and $V_t<0$ to discharging:
\begin{equation}\label{eq:V-controlled}
dV_t^a=b^V(\VX_t^a,a_t)dt+\sigma^V(\VX_t^a,[\VX_t^a]) dW^V_t, \qquad
\VX_t^a=(S_t,H_t,V_t^a,X_t^a).
\end{equation}
Here
\begin{align}
 b^V(\VX_t^a,a_t)&=
 a_t\phi(X_t^a)+c_1\psi(-X_t^a)\psi(V_{max}-V_t^a)-c_2\psi(X_t^a-X_{max})\psi(V_t^a-V_{min}),\\
 \sigma^V(\VX_t^a,[\VX_t^a]) &= \sigma_V +
\sigma_V^H|H_t|+\sigma_V^V|V^a_t|+\sigma_V^\kappa\left|V^a_t-E\left[V^a_t\right]\right|
\end{align}
and the saturation functions \begin{align*} \phi(x)=
\max\!\left\{0,\min\!\left\{1,\frac{x}{\delta},\frac{X_{\max}-x}{\delta}\right\}\right\},\qquad
\psi(x) = \max\left\{0,\min\left\{1,\frac{x}{\d}\right\}\right\},
\end{align*}
attenuate the control action as the state approaches the admissible SOC and power limits,
thereby mitigating boundary violations without enforcing hard state constraints.

We allow $\sigma^V$ to depend on $(S,H,V,X)$ in order to capture state-dependent uncertainty, for
instance random tracking or forecast errors. Specifically, we let volatility increase with
operating stress, i.e., under large net home load and at high charging/discharging power levels.
We could also incorporate asymmetric charging and discharging efficiencies, leading to dynamics of
the form $$dX_t^a=\left(\eta_c V_t^{a,+}-\eta_d^{-1}V_t^{a,-}\right)dt,$$ with
$\eta_c,\eta_d\in(0,1)$. Although economically realistic, this refinement leaves the theoretical
structure unchanged and is therefore omitted for simplicity.

\medskip
For the cost functional \eqref{e2}, we consider the running cost
\begin{equation}\label{e:def_f_num}
f(t,\VX_t^a,[\VX_t^a],a_t)
=
S_t(H_t+V_t^a) + \lambda_V |V_t^a|^2 + \lambda_a a_t^2 + h_{\textbf{bat}}(X_t^a-E[X_t^a]) +
h_{\textbf{con}}(H_t+V_t^a-E[H_t+V_t^a]),
\end{equation}
with $h_{i}(x)=c_{i}((x^{-})^2+2(x^+)^2)$ for some positive constants $c_{i}$, for
$i=\textbf{bat},\textbf{con}$. The term $S_tV_t^a$ drives the arbitrage mechanism, encouraging
charging at low prices and discharging at high prices. Quadratic penalties model battery
degradation and ramp-rate constraints. The mean-field terms penalize deviations from the
population average, promoting aggregate stabilization.

The terminal cost is
\begin{equation}\label{termc}
g(\VX_T^a,[\VX_T^a])
=
-\gamma S_T X_T^a + \frac{\omega}{2}\left(X_T^a-E\left[X_T^a\right]\right)^2.
\end{equation}
An alternative formulation would replace the quadratic mean-field penalty with a Wasserstein-$2$
distance between the empirical SOC distribution and a prescribed target law. This choice would
yield a genuinely distributional control objective, which is conceptually appealing and may be of
independent interest, but we leave it for future work.

\subsection{FBSNN and the optimal control}\label{sec:num-fbsde}
The optimization problem is numerically challenging due to the fully coupled and nonlinear
structure of the associated FBSDE combined with a mean-field type interaction. In particular,
since the running cost depends quadratically on the control, choosing the optimal strategy leads
to a backward component with quadratic growth in $\zeta$ and a forward dynamics that depends
explicitly on the backward variables. This leads to a coupled system associated with a
five-dimensional HJB equation.

More precisely, the stochastic control problem is associated to the following mean-field FBSDE
\begin{equation}
\begin{cases}
  dV_{t}=b^{V}(\VX_{t},\hat{a}(\VX_{t},\zeta_t))dt+\s^V(\VX_{t},[\VX_{t}]) dW^{V}_{t},\qquad\qquad \VX_t=(S_t, H_t, V_t, X_t),\\
  dX_{t}=V_{t}dt. \\
  dY_t = -f(t,\VX_{t},[\VX_{t}], \hat{a}(\VX_{t},[\VX_{t}],\zeta_t))dt + \zeta_tdW_t, \\
  Y_T = g(\VX_{T},[\VX_{T}]),
\end{cases}
\label{eq:fbsde_system}
\end{equation}
where $\hat{a}(\VX_t,\zeta_t):=-\frac{\phi(X_t)\zeta^V_t}{2\l_a\s^V(\VX_{t},[\VX_{t}])}$ and $(S,H)$ are the exogenous processes defined in \eqref{eq:price-solution} and \eqref{eq:load-ou}.
To solve \eqref{eq:fbsde_system}, we adopt a slight modification of the deep learning approach
introduced in \cite{MR4478485}, inspired by \cite{Raissi_2018}. The key idea is to approximate the
backward component over the full time horizon by the neural representation $$Y_t \approx
u_\theta(t,\VX_t),$$ where $u_\theta$ is a feed-forward neural network with parameters $\theta$.

A key structural feature of our implementation is that the process $\zeta_t$ is not parameterized
by a separate network. Instead, it is recovered via automatic differentiation:
$$\zeta_t=\nabla_{(S,H,V)}u_\theta(t,\VX_t)\Sigma(t,\VX_t,[\VX_t]),$$ where $\Sigma$ is a
$3\times3$ diagonal matrix with $(\s^S,\s^H,\s^V)$ on the diagonal. This design enforces
consistency between $Y$ and $\zeta$, keeps the martingale term aligned with the value function
approximation, reduces the number of trainable parameters, and improves numerical stability in the
presence of quadratic growth in $\zeta$.

The dependence on the law $[\VX_t]$ is handled through a particle approximation. At each training
step, we simulate $N_{\text{sim}}$ trajectories and approximate the law of $\VX_t$ by its
empirical measure, $$ [\VX_t] \approx \frac{1}{N_{\text{sim}}} \sum_{i=1}^{N_{\text{sim}}}
\delta_{\VX_t^{(i)}}. $$ In particular since the dependence on the law is only through the mean we
may just calculate it as empirical average across the simulated batch. In this way, the mean-field
FBSDE is approximated by a large but finite interacting particle system. As $N_{\text{sim}}$
increases, the approximation converges in the sense of propagation of chaos.

We discretize the time interval $[0,T]$ on a uniform grid $t_n=n\Delta t$, $n=0,\dots,N$, with
$\Delta t=T/N$. Let $$Y_{t_n}=u_\theta(t_n,\VX_{t_n})$$ denote the network output evaluated at
$(t_n,\VX_{t_n})$. The forward and backward components are then approximated by a {\it forward}
Euler-Maruyama scheme:
\begin{align*}
\VX_{t_{n+1}} &= \VX_{t_n} + b_{t_n}\Delta t + \sigma_{t_n}\Delta W_n,\\ \tilde{Y}_{t_{n+1}} &=
Y_{t_n} - f_{t_n}\Delta t + \zeta_{t_n}\Delta W_n,
\end{align*}
where $\Delta W_n=W_{t_{n+1}}-W_{t_n}$ and $b_{t_n},\sigma_{t_n},f_{t_n},\zeta_{t_n}$ are
evaluated at $(t_n,\VX_{t_n})$ (and, when applicable, at the corresponding mean-field terms).

Training amounts to minimizing the mismatch between the network outputs and the one-step
discretized backward dynamics, together with the terminal condition: $$
\sum_{n=0}^{N-1}\left\|Y_{t_{n+1}}-\tilde{Y}_{t_{n+1}}\right\|^2
+\left\|Y_{t_N}-g(\VX_{t_N})\right\|^2 +\left\|\zeta_{t_N}-\nabla
g(\VX_{t_N})\sigma_{t_N}\right\|^2, $$ where $\|\cdot\|$ is the Euclidean norm on
$\R^{N_{\text{sim}}}$. The last term enforces consistency at terminal time also at the gradient
level, which typically improves numerical stability. Given the strong coupling and quadratic
growth of the driver, standard numerical stabilization techniques are employed, such as gradient
clipping, antithetic sampling and a multi-stage learning rate schedule.

The neural network $u_\theta:[0,T]\times\R^4\to\R$ is parameterized by $\theta$ and takes as input
the concatenation of time and state variables $(t,\VX_t)\in\R^5$. Its scalar output provides an
approximation of the value process $Y_t$. We use a fully connected architecture with an input
layer of dimension $1+4$ (time plus state), four hidden layers with 256 neurons each, and a
one-dimensional linear output layer. All hidden layers employ the sine activation function. Smooth
periodic activations are well suited to approximating solutions of semilinear parabolic PDEs,
which are typically smooth in both time and space. In particular, compared with piecewise-linear
activations, sine activations yield smoother gradients, which is advantageous when spatial
derivatives are computed via automatic differentiation; moreover, unlike sigmoid-type
nonlinearities, they are less prone to vanishing gradients. Network weights are initialized via
Xavier (Glorot) initialization, which scales the variance of the initial weights with the layer
width. This choice helps prevent early saturation of the sine activations and promotes stable
signal propagation across depth. Overall, the architecture is tailored to approximate a smooth
value function while preserving the structural coupling of the FBSDE through automatic
differentiation.

\subsection{Numerical experiments}\label{sec:numerical-experiments}
Table \ref{tab:parameters} reports the set of parameters adopted in our numerical experiments.
They are selected so that the grid exchange component carries a slightly larger weight than the
other cost terms, thereby making grid interactions the primary driver of the optimization while
leaving the remaining penalties as secondary regularization effects.
\begin{table}[h!] \centering
\begin{tabular}{|cc|cc|cc|cc|cc|}
\hline Parameter & Value & Parameter & Value & Parameter & Value & Parameter & Value & Parameter &
Value \\ \hline $c_1$ & 10 & $c_2$ & 10 & $\lambda_V$ & 0.001 & $\lambda_a$ & 0.01 & $\gamma$ & 1
\\ $\delta$ & 1 & $X_{\max}$ & 10 & $V_{\max}$ & 2 & $V_{\min}$ & -2 & $\sigma_V$ & 0.01 \\
$\omega$ & 0.01 & $c_{\textbf{bat}}$ & 0.0001 & $c_{\textbf{con}}$ & 0.01 & $T$ & 24 &
$N_{\text{sim}}$ & 10000 \\ $N$ & 150 & $\sigma_V^H$ & 0.001 & $\sigma_V^V$ & 0.001 &
$\sigma_V^\kappa$ & 0.001 &  &  \\ \hline
\end{tabular}
\caption{Model parameters used in the simulations.}
\label{tab:parameters}
\end{table}


Firstly, we solve the stochastic control problem numerically using the mean-field FBSDE
formulation described in Section~\ref{sec:num-fbsde}. Once training is completed, the neural
network yields an optimal feedback control $a_t^*$ and, in turn, generates controlled trajectories
$(\VX_t^*)$ that are optimal for the objective functional \eqref{e2}.

For comparison, we then consider a passive regime in which the battery is operated under a
deterministic feedback policy aimed solely at compensating the net home load. No strategic
optimization is performed. In this setting, the power $V_t$ is prescribed so as to enforce the SOC
constraints:
\begin{equation}
V_t=
\begin{cases}
-\min\{H_t,0\}, & X_t \le 0,\\ -\max\{H_t,0\}, & X_t \ge X_{\max},\\ -H_t, & \text{otherwise}.
\end{cases}
\end{equation}
Hence, whenever feasible, the battery offsets the net home load; charging or discharging is
inhibited at the SOC boundaries.

In Figure~\ref{fig:J_comparison} we compare the accumulated costs of the uncontrolled benchmark
and of the optimal controlled strategy. In the uncontrolled case, the accumulated cost is computed
by setting $\l_a=0$ in the running cost \eqref{e:def_f_num} and using the terminal contribution
\eqref{termc}. We observe a systematic reduction in the mean accumulated cost under optimization,
together with a tighter dispersion across trajectories, indicating that the learned policy
delivers
not only lower expected cost but also more stable performance.
\begin{figure}[h!]
    \centering
    \includegraphics[width=0.8\textwidth,height=0.2\textheight,keepaspectratio=false]{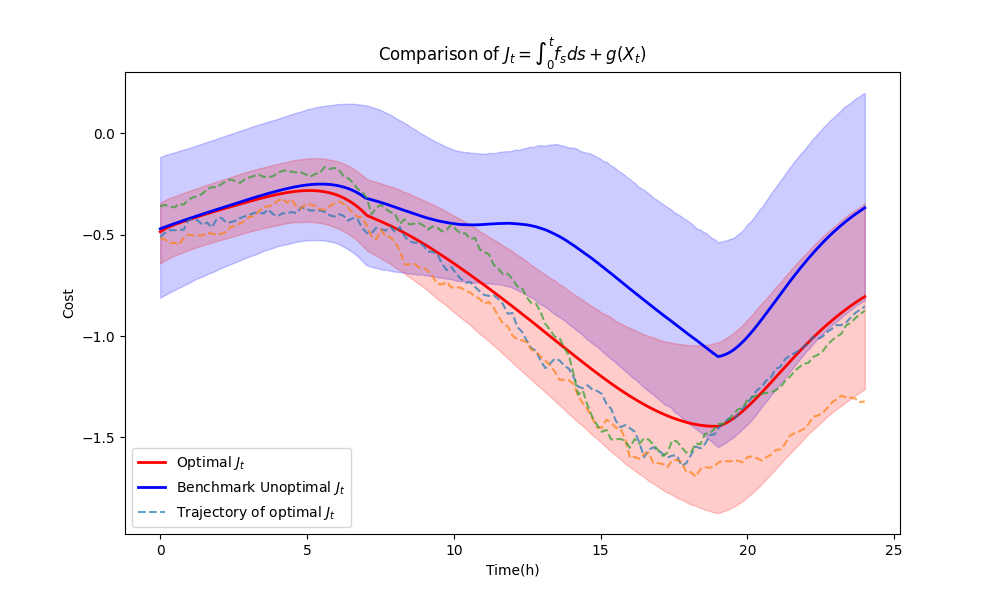}

    \vspace{-0.1cm}

    \includegraphics[width=0.8\textwidth,height=0.2\textheight,keepaspectratio=false]{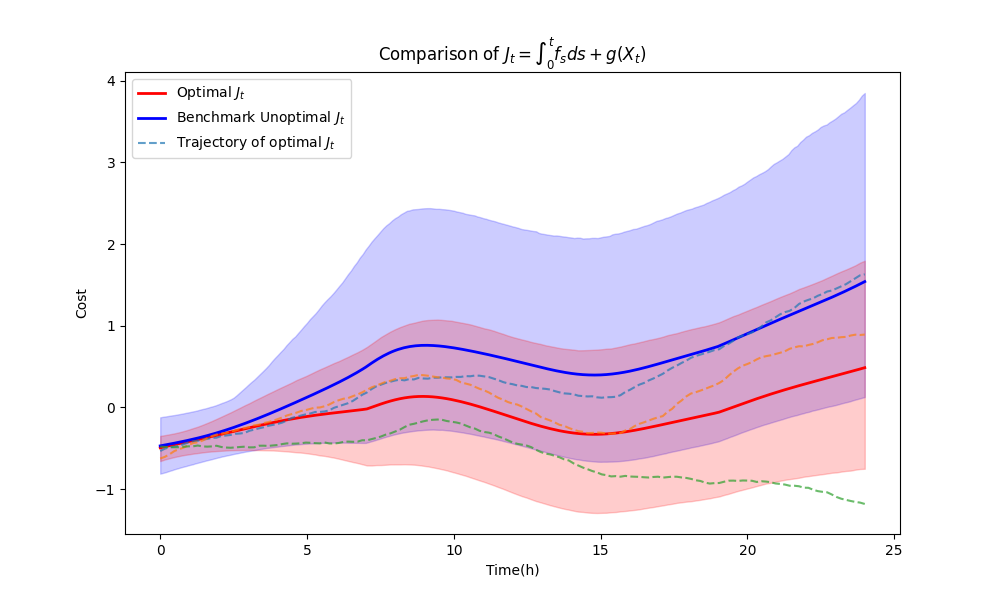}

\caption{Second week of June 2025 and first week of December 2025: comparison of the accumulated
cost. Monte Carlo mean for the uncontrolled benchmark (blue) and for the optimal control strategy
(red), with a $90\%$ percentile band (shaded) and three representative sample trajectories under
optimal control.}\label{fig:J_comparison}
\end{figure}

For a more refined analysis, Figure \ref{fig:forward_components} reports the time evolution of the
main state variables under the optimal control and the uncontrolled benchmark, for both the June
and December datasets. In both cases, the SOC remains well within its admissible range.
Remarkably, despite delivering a lower total cost, the controlled strategy systematically seeks to
minimize the use of the battery, thereby effectively mitigating degradation; this behaviour is
particularly evident in comparison with the uncontrolled benchmark. Moreover, the mean-field
penalties enhance grid stabilization, as reflected by the reduced amplitude of the bands. Finally,
the optimal charging power tracks price fluctuations, charging during low-price periods and
discharging during price peaks.

Overall, the mean-field FBSDE framework delivers both economic improvements and enhanced stability
relative to the naive benchmark, suggesting that the proposed methodology is a promising
direction. These results provide a strong motivation to further investigate the approach within
richer and more realistic models, including additional operational constraints and more detailed
system dynamics.
\begin{figure}[h!]
    \centering
    \begin{minipage}{0.48\textwidth}
        \centering
        \includegraphics[width=\textwidth]{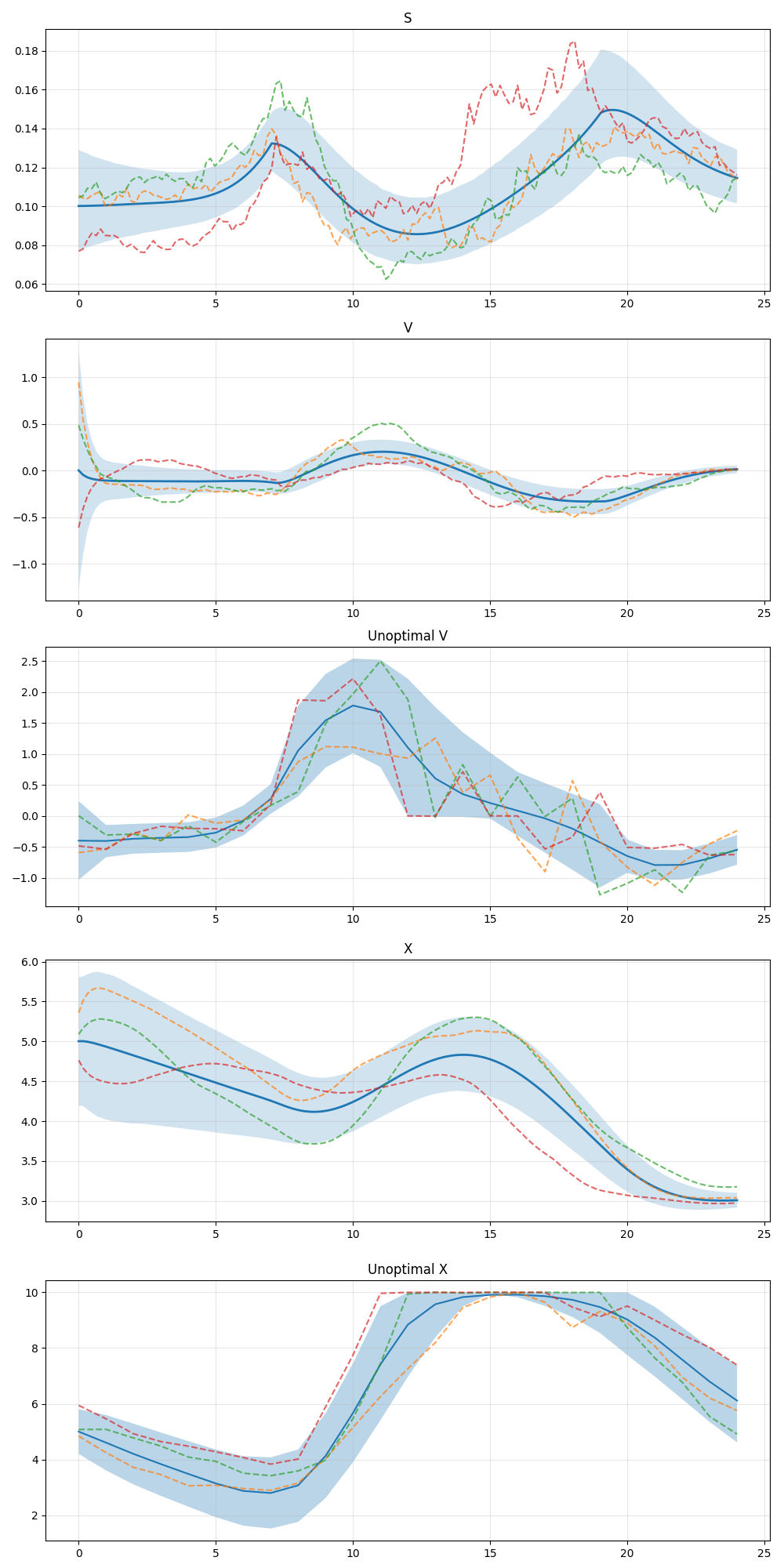}
        \caption*{(a) June 2025}
    \end{minipage}%
    \hfill
    \begin{minipage}{0.48\textwidth}
        \centering
        \includegraphics[width=\textwidth]{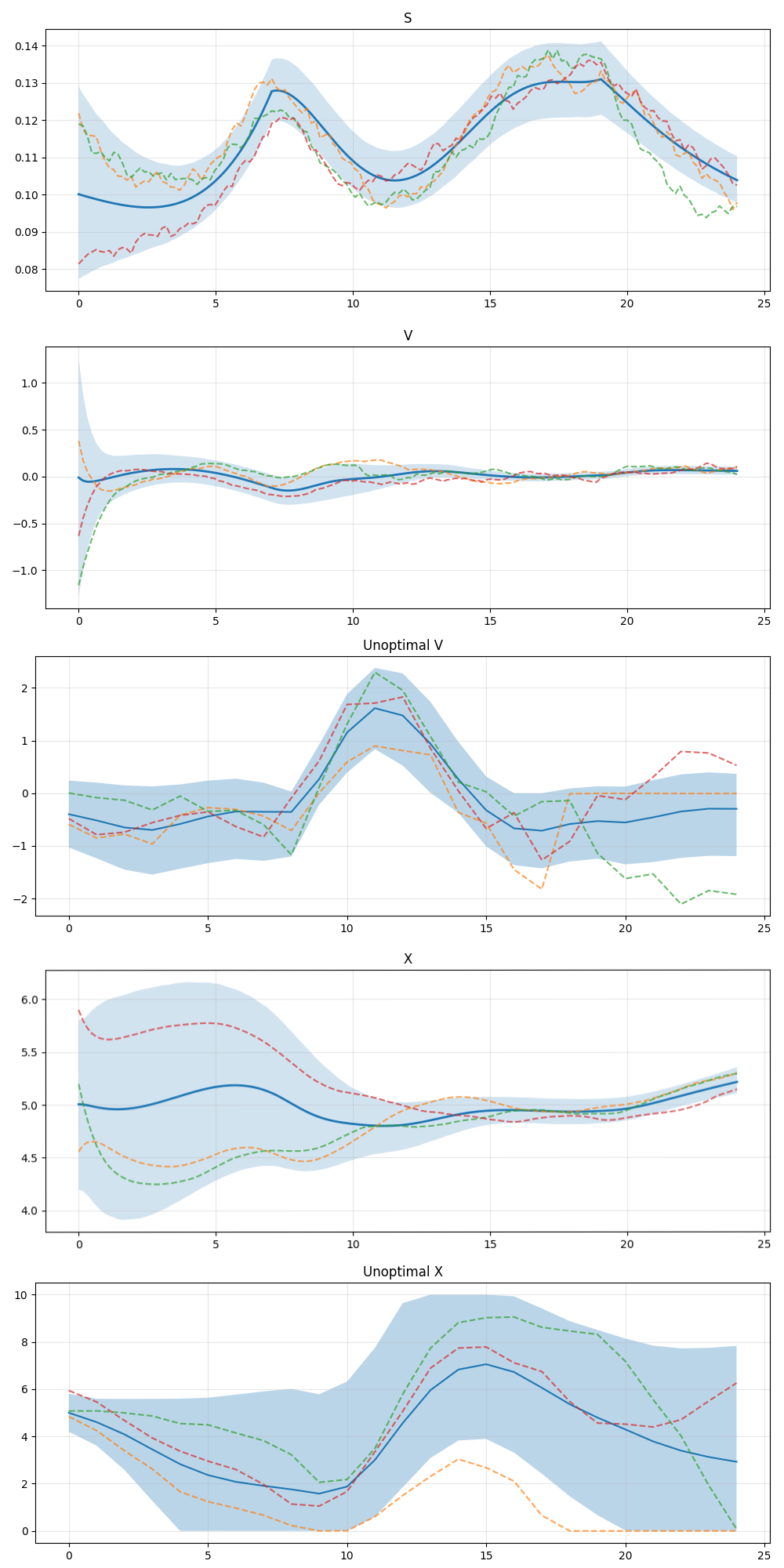}
        \caption*{(b) December 2025}
    \end{minipage}
    \caption{Second week of
    June 2025 and of December 2025: the $(S,V,X)$ forward components of
    the optimal trajectory and the $(V,X)$ forward components of the unoptimal trajectory; Monte Carlo mean (light blue), $90\%$-percentile band (shaded) and three sampled trajectories.}
    \label{fig:forward_components}
\end{figure}

\section{Proof of Theorem \ref{th3}}\label{sec4}
\subsection{Kinetic FBSDEs}
As a first step, we establish a general well-posedness result for kinetic FBSDEs of the
form
\begin{equation}\label{BSDEa}
 \begin{cases}
  dV_t=b(t,\VX_t,\BZ_t)dt+\sigma(t,\VX_t)dW_t,\qquad \VX=(V,X),\\
  dX_t=V_tdt,\\
  dY_t=-f(t,\VX_t,\BZ_t)dt+\BZ_tdW_t,\\
  \VX_0=\y,\quad Y_T=g(\VX_T).
 \end{cases}
\end{equation}
\begin{definition}[\bf Decoupling field]\label{decf}
A decoupling field for \eqref{BSDEa} is a classical solution to the Cauchy problem \eqref{Cauchy}
with
  $$F(t,\zz,u,p) = - b(t,\zz,p)p -f(t,\zz,p),$$
and terminal condition $G = g$.
\end{definition}
\begin{remark}\label{r3}
Assume that a decoupling field $u$ for \eqref{BSDEa} exists and that the (forward) SDE
\begin{equation}\label{degSDE}
 \begin{cases}
  dV_t = b(t,\VX_t,\sigma(t,\VX_t)\p_v u(t,\VX_t))dt + \sigma(t,\VX_t)dW_t,\\
  dX_t = V_t dt
 \end{cases}
\end{equation}
admits a solution with initial datum $\VX_0 =\y$. Applying It\^o's formula to the process
$u(t,\VX_t)$ and setting
\begin{equation}\label{defsol}
 Y_t = u(t,\VX_t), \qquad \BZ_t = \sigma(t,\VX_t)\p_v u(t,\VX_t),
\end{equation}
we obtain that the triple $(\VX,Y,\BZ)$ solves \eqref{BSDEa}. Moreover, by \eqref{defsol} and the
fact that $\VX$ admits a density satisfying uniform Gaussian bounds (see, e.g., \cite{MR4660246}),
we have
\begin{equation}\label{defsol2}
 \|\BZ\|_{L^\infty([0,T]\times\Omega)}
 =\|\sigma \p_v u\|_{L^\infty([0,T]\times\R^{2})}.
\end{equation}

\end{remark}
\begin{notation} We denote by
\begin{itemize}
 \item $\mathcal{S}^2_{0,T}$ the space of continuous adapted processes $S$
such that
  $$E\left[\sup_{t\in[0,T]}|S_{t}|^{2}\right]<\infty.$$
 \item $\mathcal{H}^2_{0,T}$ the space of progressively measurable processes $H$ such that
  $$E\left[\int_{0}^{T}|H_t|^{2}dt\right]<\infty.$$
\end{itemize}
\end{notation}
In Theorem \ref{th2} we prove strong well-posedness for \eqref{BSDEa} by extending the classical
{\it four-step scheme} introduced by Ma, Protter, and Yong \cite{MR1262970}. The proof rests on
two key ingredients:
\begin{itemize}
  \item the strong well-posedness of the FBSDE on sufficiently small time intervals, a classical result under standard Lipschitz continuity assumptions;
  \item Theorem \ref{t1}, which ensures the existence of a decoupling field for \eqref{BSDEa}.
\end{itemize}
By combining these ingredients, we construct the solution iteratively and propagate it in time so
as to cover the entire interval $[0,T]$. The argument crucially exploits the intrinsic regularity
properties of the decoupling field, allowing us to extend local well-posedness to arbitrarily long
time horizons and thereby obtain global strong well-posedness.

\begin{assumption}\label{ass6}
There exists positive constants $\k$ and $\bar{\a}\in(0,1]$, such that the following conditions
hold for any $t\in[0,T]$, $\zz,\zz'\in\times\R^{2}$ and $p,p'\in \R$.
\begin{enumerate}
\item \emph{Non-degeneracy and growth conditions:}
\begin{align}
  \k^{-1}\le \s(t,\zz)\le\k,\qquad |b(t,\zz,p)|+|g(\zz)|\lesssim 1,\qquad |f(t,\zz,p)|\lesssim
  1+|p|.
\end{align}
\item \emph{Regularity:} $\s\in\Lip_{0,T}(\R^{2})$, $b,f\in\Lip_{0,T}(\R^{3})$ and
$g\in\Lip(\R^{2})$. Moreover, $\s$ is differentiable in $v$, and $b,f$ are differentiable in $v,p$
and satisfy
\begin{equation}\label{and5}
\begin{split}
  |\p_{v} \s(t,\zz)-\p_{v} \s(t,\zz')|&\lesssim |\zz-\zz'|^{\bar{\a}}_{\K},\\
  |\p_{v} b(t,\zz,p)-\p_{v} b(t,\zz',p')|+|\p_{p} b(t,\zz,p)-\p_{p} b(t,\zz',p')|&\lesssim
  |\zz-\zz'|^{\bar{\a}}_{\K}+|p-p'|^{\bar{\a}},\\
  |\p_{v}f(t,\zz,p)-\p_{v}f(t,\zz',p')|+|\p_{p}f(t,\zz,p)-\p_{p}f(t,\zz',p')|&\lesssim
  |\zz-\zz'|^{\bar{\a}}_{\K}+|p-p'|^{\bar{\a}}.
\end{split}
\end{equation}
\end{enumerate}
\end{assumption}
\begin{theorem}\label{th2}
Let $\y \in L^2(\O,\mathcal{F}_0,P)$. Under Assumption \ref{ass6}, the FBSDE \eqref{BSDEa} admits
\begin{itemize}
  \item[i)] a unique strong solution $(\VX,Y,\BZ) \in \mathcal{S}^2_{0,T} \times
  \mathcal{S}^2_{0,T} \times \mathcal{H}^2_{0,T}$;
  \item[ii)] a decoupling field $u\in\Lip_{0,T}(\R^{2})\cap\C^{3+\a}_{\K,t}$
  for any $\a\in(0,\bar{\a})$ and $t\in[0,T)$.
\end{itemize}
\end{theorem}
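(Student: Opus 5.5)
Following the strategy announced before Assumption \ref{ass6}, the proof extends the four-step scheme of \cite{MR1262970}. The decoupling field comes from Theorem \ref{t1}; a local-in-time Lipschitz FBSDE result then upgrades to global existence and uniqueness through the Lipschitz bound \eqref{est1} on $u$.

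\emph{Step 1 (decoupling field).} Set $\mathbf{a}=\frac12\sigma^2$ and $F(t,z,u,p)=-b(t,z,\sigma p)\,p - f(t,z,\sigma p)$, with the argument $\sigma(t,z)p$ so that the identification $\BZ=\sigma\p_v u$ of \eqref{defsol} is consistent. Then check Assumption \ref{ass7}.
\begin{itemize}
\item[a)] Ellipticity of $\mathbf{a}$ follows from the bound $\k^{-1}\le\sigma\le\k$.
\item[b)] $\mathbf{a}\in C^{\bar\a}_{\K,T}$ follows from $\sigma\in\Lip_{0,T}$ and boundedness, since $|z-z'|\lesssim |z-z'|_{\K}$ locally; the global sup is handled by boundedness.
\item[c)] Since $|b|\lesssim 1$ and $|f|\lesssim 1+|p|$, we get $|F|\lesssim 1+|p|$.
\item[d)] The Lipschitz bound $(1+|p|)(\cdots)$ comes from the Lipschitz properties of $b,f$ in $(z,p)$ together with the product $b\,p$.
\item[e)] $g$ is bounded and Lipschitz.
\end{itemize}
Theorem \ref{t1} then gives a classical solution $u\in\C^{2+\a}_{\K,t}$ satisfying \eqref{est1}. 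In particular $u\in\Lip_{0,T}(\R^2)$ and $\|\p_v u\|_\infty<\infty$.

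\emph{Step 2 (extra regularity).} To upgrade to $\C^{3+\a}_{\K,t}$, I would differentiate the equation in $v$. Formally $w=\p_v u$ solves
\[
\K w+\p_x u+\p_v\mathbf{a}\,\p_{vv}u = \p_v F + \p_u F\, w + \p_p F\,\p_v w ,
\]
which is a linear kinetic equation for $w$. Its coefficients are H\"older by the conditions \eqref{and5} and by $\p_v w\in C^{\a}_{\K}$. The term $\p_x u$ is bounded because $u$ is Lipschitz, and it is in fact $C^{\a}_{\K}$-regular via the intrinsic $\C^{2+\a}$ estimate along $\Y$. The optimal Schauder estimates of \cite{MR4660246,Luce2023}, applied on $[0,t]$ away from $T$ (hence the index $t<T$), give $w\in\C^{2+\a}_{\K,t}$, i.e.\ $u\in\C^{3+\a}_{\K,t}$. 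To make this rigorous I would work with difference quotients in $v$ rather than differentiating directly.

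\emph{Step 3 (existence).} With $\p_v u$ bounded and Lipschitz in $z$ on $[0,t]$, the drift of the forward SDE \eqref{degSDE} is Lipschitz on $[0,t]$. Hence \eqref{degSDE} has a unique strong solution there, and \eqref{defsol} yields a solution on $[0,t]$ by Remark \ref{r3}, the It\^o formula being valid for $u\in\C^2_{\K}$ by Remark \ref{r1}. To reach $T$, where $\p_v u$ may lose H\"older regularity in $v$ but remains Lipschitz-bounded, I would patch with the small-time result. On $[T-\d,T]$, with $\d$ depending only on the Lipschitz constants of $b,f,\sigma,g$, the Lipschitz FBSDE is uniquely solvable from any $L^2$ initial datum, and its solution coincides with $u(t,\VX_t)$ by uniqueness of the decoupled representation. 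Gluing at $T-\d$ gives a solution in $\mathcal{S}^2\times\mathcal{S}^2\times\mathcal{H}^2$; square integrability follows from the bounds on $b$ and $\BZ$ and from $\y\in L^2$.

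\emph{Step 4 (uniqueness), the main obstacle.} Let $(\VX',Y',\BZ')$ be another solution. The plan is to show that $Y'_t=u(t,\VX'_t)$ by iterating backward on intervals $[T-k\d,T-(k-1)\d]$. The local uniqueness theorem requires a Lipschitz terminal condition on each interval, and $u(T-(k-1)\d,\cdot)$ provides one, with constant uniformly controlled by \eqref{est1}. This is exactly why a step size $\d$ can be chosen independently of $k$. On each subinterval, uniqueness identifies the solution with the one built from $u$, and induction covers $[0,T]$. The delicate point is that $|\p_p F|$ grows with $|p|$. I would apply the local theorem after truncating $b,f$ in $p$ at level $\k\|\sigma\p_v u\|_\infty$ (cf.\ \eqref{defsol2}) and then show a posteriori that any solution has $\BZ'$ within the truncation. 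For this I would identify $\BZ'$ with $\sigma\p_v u$ by applying It\^o to $u(t,\VX'_t)-Y'_t$ and using a Gronwall/BSDE stability estimate.
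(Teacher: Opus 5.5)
\textbf{Gap 1: strong well-posedness of the feedback SDE (your Step 3).} The existence argument fails as written. You assert that on $[0,t]$ the feedback drift $b(t,z,\s(t,z)\p_v u(t,z))$ is Lipschitz in $z$. But $u\in\C^{3+\a}_{\K,t}$ only gives $\p_v u\in C^{2+\a}_{\K,t}$. So $\p_v u$ is Lipschitz in $v$, but only H\"older of order $(2+\a)/3<1$ in $x$. By Remark~\ref{r1}, $\p_x u$ itself is only guaranteed at order $3$, and nothing controls $\p_x\p_v u$. Since the noise acts only on $V$, Cauchy--Lipschitz is not available for \eqref{degSDE}.

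This is exactly where the paper invokes regularization by noise for degenerate kinetic SDEs \cite{MR3606742}, which needs H\"older regularity in $x$ of order strictly larger than $2/3$. That threshold is the whole reason for upgrading from $\C^{2+\a}$ (exponent $(1+\a)/3$) to $\C^{3+\a}$ (exponent $(2+\a)/3$). The resulting pathwise uniqueness of \eqref{degSDE} is also what closes the paper's uniqueness proof. Your Step 4 depends on it too, since it matches each piece with ``the one built from $u$''.

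Your own ingredients would actually let you bypass the SDE entirely. Since $\d$ depends only on the uniform Lipschitz bound \eqref{est1}, you can:
\begin{itemize}
\item glue local Lipschitz FBSDE solutions on every subinterval, not only the last one;
\item identify each piece with $u$ through your It\^o/linear-BSDE argument, which needs only $\|\p_v u\|_\infty<\infty$ and It\^o's formula for $u\in\C^{2+\a}$;
\item obtain uniqueness by forward induction from $\VX_0=\y$, using local uniqueness (which is pure Lipschitz theory and insensitive to the degeneracy).
\end{itemize}
No truncation in $p$ is needed for this: under Assumption~\ref{ass6}, $b$ and $f$ are globally Lipschitz in $\BZ$, and the growth of $\p_pF$ matters only at the PDE level.

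\textbf{Gap 2: the regularity upgrade (your Step 2).} Step 2 is circular. Differentiating in $v$ produces the commutator $[\p_v,\Y]u=\p_x u$ as a source term, and you need it in $C^{\a}_{\K}$. However:
\begin{itemize}
\item $\p_x$ has homogeneous order $3$;
\item the $\Y$-part of the $\C^{2+\a}$ norm controls only $\Y u=v\p_x u+\p_t u$, not $\p_x u$ separately;
\item the Lipschitz bound gives merely $\p_x u\in L^\infty$.
\end{itemize}
Requiring $\p_x u\in C^{\a}_{\K}$ is essentially assuming the order-$(3+\a)$ regularity in $x$ that you want to prove. Difference quotients in $v$ run into the same commutator.

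The paper never differentiates the equation. From \eqref{and5} and $u\in\C^{2+\a}_{\K,t}$ it obtains $\s,\hat F\in C^{1+\a}_{\K,t}$, where $\hat F(t,z)=F(t,z,u(t,z),\p_v u(t,z))$. It then applies the higher-order Schauder estimate of \cite{Luce2023} directly to $\K u=\hat F$, which gives $u\in\C^{3+\a}_{\K,t}$. You need this for part ii) whichever route you take for part i).

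\textbf{A minor point on Step 1.} With your (consistent) choice $F=-b(t,z,\s p)p-f(t,z,\s p)$, check d) fails as stated. The $z$-Lipschitz constant of $b(t,z,\s(t,z)p)\,p$ is of order $|p|^2$, not $1+|p|$, so Theorem~\ref{t1} cannot be applied verbatim.
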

\begin{proof}
We begin by proving strong existence. By Theorem \ref{t1}, there exists a decoupling field
$u\in\Lip_{0,T}(\R^{2})$ for \eqref{BSDEa}. Theorem~\ref{t1} also ensures that $u \in
\C^{2+\a}_{\K,t}$ for any $\a \in (0,1)$ and $t\in[0,T)$. However, this regularity is not
sufficient to ensure strong well-posedness of \eqref{degSDE}. The obstruction arises from the
degeneracy of the system. Indeed, the strongest currently available results on regularization by
noise for SDEs of the form \eqref{degSDE}, due to Chaudru de Raynal \cite{MR3606742}, require that
the drift coefficient
  $$
  b(t,v,x,\sigma(t,v,x)\p_v u(t,v,x))$$
be H\"older continuous in $x$ with exponent strictly larger than $\frac{2}{3}$ (and H\"older
continuous in $v$ with some positive exponent). Now, the condition $u\in \C^{2+\a}_{\K,t}$ only
yields $\p_v u\in C^{1+\a}_{\K,t}$, which in turn implies $\p_v u(t,\cdot)\in
C^{\frac{1+\a}{3}}(\R^{2})$ in the standard Euclidean sense and this level of regularity is not
sufficient to satisfy the above requirement.

At this stage we invoke the stronger regularity assumption \eqref{and5} on the coefficients, which
guarantee that, setting $\hat{\ff}(t,z):=\ff(t,z,u(t,z),\p_v u(t,z))$, we have
\begin{equation}\label{ass4}
  \s,\hat{\ff}\in C^{1+\a}_{\K,t},\qquad t\in[0,T),\ \a\in(0,\bar{\a}).
\end{equation}
This yields higher regularity for the solution. Indeed, under condition \eqref{ass4}, the Schauder
estimates for equation $\K u=\hat{F}$ proved in \cite{Luce2023} (see also Proposition~3.9 and
Lemma~3.10 in \cite{MR1826957}) yield $u\in \C^{3+\a}_{\K,t}$ for every $\a\in(0,\bar{\a})$ and
every $t\in[0,T)$. In particular, $\p_v u\in C^{2+\a}_{\K,t},$ and this regularity is enough to
guarantee that \eqref{degSDE} is strongly well posed.

\medskip We next address uniqueness. Let $(\tilde{\VX},\tilde{Y},\tilde{\BZ})$ be a strong solution to
\eqref{BSDEa}. By the uniform-in-$t$ Lipschitz continuity of $u,\sigma$ in $\zz$, and of $b,f,g$
in $(\zz,\bz)$, the standard local theory for FBSDEs (see, e.g., Theorem~1.1 in \cite{MR1901154})
applies. In particular, there exists $\d>0$ sufficiently small such that, for every $s \in
[0,T-\d]$, the FBSDE on the interval $[s,s+\d]$ with initial condition $\VX_s=\tilde{\VX}_s$ and
terminal condition
  $Y_{s+\d}=u(s+\d,\VX_{s+\d})$
admits a unique strong solution $(\VX,Y,\BZ)$ satisfying $(\VX,Y)\in\mathcal{S}^{2}_{s,s+\d}$ and
$\BZ\in\mathcal{H}^{2}_{s,s+\d}$.

We introduce a partition $0=t_0<t_1<\cdots<t_N=T$ such that $t_{k+1}-t_k\le\d$ for all $k$:
arguing by backward induction on $k$, we show that
  $$\tilde{Y}_t = u(t,\tilde{\VX}_t)
  \qquad\hbox{and}\qquad \tilde{\BZ}_t =\sigma(t,\tilde{\VX}_t) \p_v u(t,\tilde{\VX}_t), \qquad
  t\in[t_k,t_{k+1}],$$
for every $k=0,\dots,N-1$.

\smallskip
\noindent {\bf Step 1: the last interval.} 
On the last interval $[t_{N-1},T]$ the restriction of $(\tilde{\VX},\tilde{Y},\tilde{\BZ})$ solves
the local FBSDE with terminal datum $Y_T=u(T,\VX_T)$. By local uniqueness on $[t_{N-1},T]$, this
solution must coincide with the solution constructed via the decoupling field $u$; hence, $$
\tilde{Y}_t = u(t,\tilde{\VX}_t), \qquad \tilde{\BZ}_t = \sigma(t,\tilde{\VX}_t)\p_v
u(t,\tilde{\VX}_t), \qquad t\in[t_{N-1},T]. $$ In particular,
$\tilde{Y}_{t_{N-1}}=u(t_{N-1},\tilde{\VX}_{t_{N-1}})$ a.s.

\smallskip
\noindent {\bf Step 2: induction.} Assume that for some $k\in\{0,\dots,N-2\}$ we already know
$\tilde{Y}_{t_{k+1}}=u(t_{k+1},\tilde{\VX}_{t_{k+1}})$ a.s. Then, on the interval $[t_k,t_{k+1}]$,
the restriction of $(\tilde{\VX},\tilde{Y},\tilde{\BZ})$ solves the local FBSDE with terminal
condition
  $$ Y_{t_{k+1}} = \tilde{Y}_{t_{k+1}} = u(t_{k+1},\tilde{\VX}_{t_{k+1}}). $$
By local uniqueness on $[t_k,t_{k+1}]$, it follows that this restriction must coincide with the
solution associated with the terminal condition $u(t_{k+1},\cdot)$. Hence,
  $$\tilde{Y}_t =u(t,\tilde{\VX}_t), \qquad \tilde{\BZ}_t = \sigma(t,\tilde{\VX}_t)\p_v u(t,\tilde{\VX}_t), \qquad
  t\in[t_k,t_{k+1}], $$
and in particular $\tilde{Y}_{t_k}=u(t_k,\tilde{\VX}_{t_k})$ a.s. This closes the backward
induction.

\smallskip
\noindent We have thus shown that $\tilde{Y}_t=u(t,\tilde{\VX}_t)$ and
$\tilde{\BZ}_t=\sigma(t,\tilde{\VX}_t)\p_v u(t,\tilde{\VX}_t)$ for all $t\in[0,T]$. Plugging this
identity into the forward equation of \eqref{BSDEa}, we see that $\tilde{\VX}$ solves the feedback
SDE \eqref{degSDE}. By strong well-posedness of the above SDE, we obtain that $\tilde{\VX}$ and
$\VX$ are indistinguishable; consequently, also $\tilde{Y}=Y$ in $\mathcal{S}^{2}_{0,T}$ and
$\tilde{\BZ}=\BZ$ in $\mathcal{H}^{2}_{0,T}$. This proves uniqueness for \eqref{BSDEa}.
\end{proof}

\subsection{Frozen optimal control problem}
Given a flow of marginals $\m=(\mu_t)_{t\in[0,T]}\in C([0,T],\mathcal{P}_2(\R^2))$, we consider
the frozen optimal control problem \eqref{frozen_SOC}. With Theorems \ref{t1} and \ref{th2} at our
disposal, the following result follows by adapting the arguments in \cite{MR3752669}, Section
4.4.3.
\begin{theorem}\label{th4}
Fix a flow of marginals $\m\in C([0,T],\mathcal{P}_2(\R^2))$ and an initial datum $\y \in
L^2(\O,\mathcal{F}_0,P)$. Under Assumptions \ref{ass1} and \ref{ass2}, the frozen FBSDE
\eqref{linFBSDE} admits:
\begin{itemize}
  \item[i)] a unique strong solution $(\VX^{\m},Y^{\m},\BZ^{\m}) \in \mathcal{S}^2_{0,T} \times
  \mathcal{S}^2_{0,T} \times \mathcal{H}^2_{0,T}$;
  \item[ii)] a decoupling field $u^{\m}\in\Lip_{0,T}(\R^{2})\cap\C^{3+\a}_{\K,t}$,
  for any $\a\in(0,\bar{\a})$ and any $t\in[0,T)$, which is the classical solution
  of the HJB problem \eqref{HJB}.
\end{itemize}
Moreover, $\VX^{\m}$ is the optimal state process for problem \eqref{frozen_SOC} and
\begin{align}
 a^{\m}_t &:= 
 \hat{a}(t,\VX_t^{\m},\m_t,\p_v u^{\m}(t,\VX^{\m}_t)),\qquad t\in[0,T]
 \end{align}
is the optimal control process. In particular, the optimal cost is
  $$
  J^\m(a^{\m})=E[u^{\m}(0,\eta)].$$
\end{theorem}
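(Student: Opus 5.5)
The plan is to reduce Theorem \ref{th4} to Theorem \ref{th2} by rewriting the frozen FBSDE \eqref{linFBSDE} in the form \eqref{BSDEa}. Then I identify the decoupling field with the value function through a verification argument based on It\^o's formula, which is available thanks to the intrinsic regularity $\C^{3+\a}_{\K,t}$ (cf. Remark \ref{r1}).

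\textbf{Reduction to Theorem \ref{th2}.} Since $\m$ is fixed, I set
$$\tilde b(t,\zz,p):=b\bigl(t,\zz,\m_t,\hat a(t,\zz,\m_t,\s^{-1}(t,\zz,\m_t)p)\bigr),\qquad \tilde f(t,\zz,p):=f\bigl(t,\zz,\m_t,\hat a(t,\zz,\m_t,\s^{-1}(t,\zz,\m_t)p)\bigr),$$
together with $\tilde\s(t,\zz):=\s(t,\zz,\m_t)$ and $\tilde g(\zz):=g(\zz,\m_T)$. Then \eqref{linFBSDE} is exactly \eqref{BSDEa}, and the Cauchy problem of Definition \ref{decf} is the HJB equation \eqref{HJB} written with $p_v=\p_v u$, because $\tilde\s^{-1}\cdot\tilde\s\,\p_v u=\p_v u$.

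The main obstacle is checking Assumption \ref{ass6}, and in particular the growth bounds. Assumption \ref{ass6} asks for $|\tilde b|\lesssim 1$ and $|\tilde f|\lesssim 1+|p|$. Assumption \ref{ass1} only gives $|b|\lesssim 1+|\hat a|\lesssim 1+|p|$ and $|f|\lesssim 1+|p|^2$, so $\tilde b,\tilde f$ grow too fast in $p$. I would get around this by truncation. By \eqref{defsol2}, the only values of $p$ that matter are $|p|\le \|\s\p_v u\|_\infty$.

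\begin{itemize}
\item First, I derive an a priori Lipschitz bound $\|u\|_{\Lip_{0,T}}\le L$ that does not depend on the truncation level. The natural route is through the control representation. For each fixed admissible control, the map $\zz\mapsto J^\m$ is Lipschitz by the Lipschitz assumptions \eqref{and4} and standard SDE stability. The minimizing controls can be restricted to a set that is bounded in $L^2$, because $f$ grows quadratically in $a$ with a coercive minimizer. Alternatively, I can use the estimate \eqref{est1} from Theorem \ref{t1}, whose constant depends only on the structural bounds, since $F$ satisfies $|F(t,\zz,u,p)-F(t,\zz',u',p')|\lesssim(1+|p|)(\cdots)$. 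Indeed, $\p_p H=b$ at the minimizer by the envelope theorem, so the Lipschitz constant in $p$ is $\lesssim 1+|p|$, which matches Assumption \ref{ass7}.
\item Next, I replace $p$ by $\pi_R(p)$, its projection onto $[-R,R]$ with $R>\k L$. The truncated coefficients satisfy Assumption \ref{ass6}. The H\"older conditions \eqref{and5} follow by the chain rule from \eqref{and4} and the assumed H\"older bounds on $\p_v\hat a$ and $\p_{p_v}\hat a$, using that $\s^{-1}$ is Lipschitz with $\p_v\s$ H\"older.
\item Theorem \ref{th2} then gives a unique solution and a decoupling field $u^\m$. Since $|\s\p_v u^\m|\le\k L<R$, the truncation is inactive. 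Hence $u^\m$ solves the untruncated HJB \eqref{HJB}, and the triple solves \eqref{linFBSDE}.
\item Uniqueness for the untruncated system follows by the same backward-induction argument. Any solution coincides with the decoupling-field solution, so its $\BZ$ is bounded.
\end{itemize}

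\textbf{Verification.} Take an arbitrary $a\in\mathbb{A}$ with state $\VX^{a,\m}$. I apply It\^o's formula to $u^\m(t,\VX^{a,\m}_t)$, which is legitimate since $u^\m\in\C^{3+\a}_{\K,t}$ on $[0,t]$ for $t<T$; I then let $t\to T$ using the continuity of $u^\m$ and the Lipschitz and growth bound \eqref{est1}. The HJB equation gives
$$\KK u^\m+b(\cdot,a)\p_v u^\m+f(\cdot,a)\ge \KK u^\m+H(\cdot,\p_v u^\m,\hat a)=0,$$
because $\hat a$ minimizes $H$ (Assumption \ref{ass2}). Taking expectations, with the stochastic integral a true martingale since $\s\p_v u^\m$ is bounded and $a\in L^2$, yields $E[u^\m(0,\eta)]\le J^\m(a)$. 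Equality holds for $a=a^\m$, where the state is $\VX^\m$ by Remark \ref{r3}. This gives optimality and the identity $J^\m(a^\m)=E[u^\m(0,\eta)]$. Admissibility of $a^\m$ follows from $|\hat a|\lesssim|p_v|$ and the boundedness of $\p_v u^\m$.
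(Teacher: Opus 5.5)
Your skeleton (recast \eqref{linFBSDE} as \eqref{BSDEa}, invoke Theorem~\ref{th2}, then verify optimality by It\^o's formula, which the intrinsic regularity legitimizes) matches what the paper intends when it says the result follows from Theorems~\ref{t1}, \ref{th2} and \cite{MR3752669}, Sec.~4.4.3. You also correctly notice that Assumption~\ref{ass6} fails as stated, since $|\tilde b|\lesssim 1+|p|$ and $|\tilde f|\lesssim 1+|p|^2$. The gap is in removing your truncation. You need a bound on $\|\s\p_v u^R\|_{\infty}$, independent of $R$, for the decoupling field $u^R$ of the \emph{truncated} system, and neither of your routes gives it.

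Route 1 bounds the Lipschitz constant of the value function $\inf_{a}J^{\m}(a;t,\zz)$, but $u^R$ is not that value function. The truncated Hamiltonian $q\mapsto H(q,\hat a(\s^{-1}\pi_R(\s q)))$ is in general not a minimum of $b(\cdot,a)q+f(\cdot,a)$ over a fixed, state-independent set of controls; the induced constraint on $a$ depends on $(t,\zz)$ through $\s$ and $\hat a$. So your verification inequality does not apply to it, and $u^R$ coincides with the value function only once the truncation is known to be inactive, which is exactly what is being proved.

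Route 2 misapplies Theorem~\ref{t1}. Besides the Lipschitz condition you checked, Assumption~\ref{ass7} requires $|F|\lesssim 1+|(\zz,u,p)|$, whereas $H(q,\hat a(q))$ is genuinely quadratic (for $b=a$, $f=a^2/2$ one gets $-q^2/2$). After truncation the linear growth holds only with a constant of order $R$, so the ``structural'' constant in \eqref{est1} depends on $R$ and the argument is circular.

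The fix is to truncate the control rather than the adjoint variable: replace $A$ by $A_R:=A\cap[-R,R]$. The truncated equation is then the HJB equation of the problem with $A_R$-valued controls, and your own verification argument identifies $u^R$ with its value function. Assumption~\ref{ass1} makes $b,\s,f,g$ Lipschitz in $\zz$ uniformly in $a$, so Gronwall gives $|J^{\m}(a;t,\zz)-J^{\m}(a;t,\zz')|\lesssim|\zz-\zz'|$ uniformly in $a$; no $L^2$-bound on controls is needed. This yields an $R$-independent Lipschitz constant $L$. By $|\hat a|\lesssim|p_v|$, choosing $R$ larger than a structural multiple of $L$ makes the constraint inactive. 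This is precisely what the uniform-in-$a$ Lipschitz hypotheses in Assumption~\ref{ass1} are for.

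You still have to check that the constrained minimizer is unique and regular enough to run the proof of Theorem~\ref{th2}. If $a\mapsto H$ is convex, it is the projection of $\hat a$ onto $A_R$, which is only Lipschitz in $p_v$. The truncated Hamiltonian is nonetheless $C^{1,1}$ in $p_v$ by the envelope theorem, and that is what the Schauder step uses.

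Finally, your uniqueness claim is circular as written. The short-time theory used in the backward induction needs coefficients Lipschitz in $\BZ$, which the untruncated $\tilde f$ is not. It therefore covers an arbitrary solution only if its $\BZ$ is already known to be bounded, which is the conclusion rather than an input. Either state uniqueness among solutions with bounded $\BZ$ (which then follows from the truncated problem), or give a separate argument for solutions with $\BZ\in\mathcal{H}^2_{0,T}$.
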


\subsection{Conclusion: the fixed point argument}
We conclude the proof of Theorem \ref{th3} by means of a fixed-point argument. Let a flow of
marginals $\m\in C([0,T],\mathcal{P}_2(\R^2))$ be given. We denote by $(\VX^{\m},Y^{\m},\BZ^{\m})$
the solution to the frozen FBSDE \eqref{linFBSDE}, and by $u^{\m}$ the associated decoupling
field. The forward component $\VX^{\m}$ then solves the SDE
\begin{equation}\label{linFSDE2}
 \begin{cases}
  dV^{\m}_t = \b(t,\VX^{\m}_{t},\mu_t)dt
  + \sigma(t,\VX^{\m}_{t},\mu_t)dW_{t},\\
  dX^{\m}_t = V^{\m}_tdt,
 \end{cases}
\end{equation}
with initial condition $\VX^{\m}_{0}=\y$, where
\begin{equation}\label{beta}
  \b(t,z,\m):= b(t,z,\mu,\hat{a}(t,z,\mu,\p_v u^{\m}(t,z))).
\end{equation}

We show that the map
\begin{equation}\label{Phi}
 \Phi:\m \longmapsto \left([\VX^{\mu}_t]\right)_{t\in[0,T]}
\end{equation}
is a contraction with respect to the norm
\begin{align}
  \mathcal{W}_{1,T}(\mu,\nu) &:= \max_{t\in[0,T]}\mathcal{W}_{1}(\mu_{t},\nu_{t})\\ \label{KR}
  &= \max_{t\in[0,T]}\sup_{\|\phi\|_{\Lip(\R^{2})}\le 1}\int_{\R^2} \phi(z)(\mu_t-\nu_t)(dz).
\end{align}
Equipped with $\mathcal{W}_{1,T}$, the space $C([0,T],\mathcal{P}_1(\R^2))$ is a Banach space. As
anticipated in Remark \ref{r10}, rather than working in $C([0,T],\mathcal{P}_2(\R^2))$, we seek a
fixed point of $\Phi$ in the larger space $C([0,T],\mathcal{P}_1(\R^2))$, in order to exploit the
Kantorovich-Rubinstein duality \eqref{KR}. Once a fixed point has been obtained in
$\mathcal{P}_1(\R^2)$, we recover the additional square integrability, namely the finiteness of
the second moment, by standard a priori estimates for SDEs.

We note that the coefficient $\b$ in \eqref{beta} is bounded and satisfies
\begin{equation}\label{crucial}
  |\b(t,z,\m)-\b(t,z',\m')|
  \lesssim |z-z'|+\mathcal{W}_{1}(\m,\m'),\qquad
  z,z'\in\R^{2},\ \m,\m'\in \mathcal{P}_1(\R^2).
\end{equation}
Estimate \eqref{crucial} follows from two ingredients. On the one hand, it is a consequence of
assumptions \eqref{w11}-\eqref{w1} on $b$ and $\hat{a}$. On the other hand, it relies on classical
continuous-dependence results for PDEs, extended to the ultraparabolic setting in
\cite{MR2352998}. More precisely, for each $\m\in\mathcal{P}_1(\R^2)$, the function $u^{\m}$
solves \eqref{HJB}, and its stability with respect to the parameter $\m$ yields the
$\mathcal{W}_{1}$-dependence appearing in \eqref{crucial}.

We now consider the backward Kolmogorov operator $\A_{\m}+\Y$ associated with \eqref{linFSDE2},
where
\begin{equation}\label{oper2}
  \A_{\m}:=\frac{\sigma^2(t,\zz,\mu_t)}{2}\partial_{vv} +\b(t,\zz,\mu_t)\p_{v}
\end{equation}
and $\Y=v\p_{x}+\p_{t}$ is the drift vector field in \eqref{Y}. Under the current assumptions on
$\s$ and $b$, the existence of a fundamental solution to $\A_{\m}+\Y$, i.e. the transition density
of $\VX^{\m}$ in \eqref{linFSDE2}, together with Gaussian bounds, was established in
\cite{MR4660246}.
\begin{theorem}\label{t11}
The operator $\A_{\m}+\Y$ admits a fundamental solution $p^{\m}=p^{\m}(t,z;s,z')$ satisfying the
following Gaussian estimates:
\begin{align}
  \G^{\l_-}(t,\zz;s,\zz')\lesssim p^{\m}(t,\zz;s,\zz')&\lesssim\G^{\l_+}(t,\zz;s,\zz'),\\ \label{es1}
  |\p_{v}p^{\m}(t,\zz;s,\zz')|&\lesssim\frac{1}{\sqrt{s-t}}\G^{\l_+}(t,\zz;s,\zz'),\\ \label{es2}
  |\p_{vv}p^{\m}(t,\zz;s,\zz')|&\lesssim\frac{1}{s-t}\G^{\l_+}(t,\zz;s,\zz'),\qquad 0\le t<s\le T,\
  \zz,\zz'\in\R^{2},
\end{align}
where $\l_{\pm}$ are positive structural constants and $\G^\l$ denotes the fundamental solution of
$\frac{\l}{2}\p_{vv}+v\p_{x}+\p_{t}$, namely
\begin{equation}\label{Gau}
  \G^\l(t,v,x;s,v',x') =
  \frac{\sqrt{3}}{\pi\l(s-t)^{2}}\exp\left(-\frac{2(v'-v)^{2}}{\l(s-t)}+
  \frac{6(v'-v)(x'-x-v(s-t))}{\l(s-t)^{2}}-\frac{6(x'-x-v(s-t))^{2}}{\l(s-t)^{3}}\right).
\end{equation}
\end{theorem}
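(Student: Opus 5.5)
The plan is to reduce Theorem~\ref{t11} to the general theory of fundamental solutions for Kolmogorov operators with rough drift and Hölder diffusion. The estimates are then made uniform in $\m$ by showing that all constants depend only on structural bounds.

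The first step is to check the hypotheses of the parametrix construction in \cite{MR4660246} for $\A_{\m}+\Y$, uniformly in $\m$. The diffusion coefficient $\frac12\s^2(t,\zz,\m_t)$ is bounded between $\k^{-2}/2$ and $\k^2/2$ by Assumption~\ref{ass1}. It is Lipschitz in $\zz$, hence anisotropically Hölder: on bounded sets $|\zz-\zz'|\lesssim|\zz-\zz'|_{\K}$, and the Hölder seminorm is only needed locally, since globally boundedness suffices. The bound holds uniformly in $t$, and only measurability in $t$ is required. The drift $\b$ in \eqref{beta} is bounded, because $|b|\lesssim 1+|a|$, $|\hat a|\lesssim|p_v|$ by \eqref{w11}, and $\p_v u^{\m}$ is bounded since $u^{\m}\in\Lip_{0,T}$ with a structural Lipschitz constant from Theorem~\ref{t1}. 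It is also Lipschitz in $\zz$ by \eqref{crucial}. These are exactly the structural conditions under which a fundamental solution exists.

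The second step is the parametrix construction itself. Freeze the diffusion coefficient at the pole $(s,\zz')$ and take as parametrix $Z$ the Gaussian kernel of the frozen operator $\frac{\mathbf a(\tau,\zz')}{1}\p_{vv}+v\p_x+\p_t$, whose explicit form is a time-changed version of \eqref{Gau}. Then write
\[
p^{\m}=Z+\int_t^s\!\int_{\R^2} Z(t,\zz;\tau,w)\,\Phi(\tau,w;s,\zz')\,dw\,d\tau,
\]
where $\Phi$ solves the Volterra equation built from $(\A_{\m}+\Y)Z$. The singular kernel $(\A_{\m}+\Y)Z$ is bounded by
\[
\big((s-\tau)^{-1+\bar\a/2}+(s-\tau)^{-1/2}\big)\G^{\l}.
\]
The first contribution comes from the Hölder diffusion difference and the second from the bounded first-order term. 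Iterating with the Chapman--Kolmogorov-type reproduction property of $\G^{\l}$ gives a convergent series and the upper bounds with some $\l_+$. The derivatives $\p_v p^{\m}$ and $\p_{vv}p^{\m}$ are handled by differentiating $Z$, which costs $(s-t)^{-1/2}$ and $(s-t)^{-1}$ respectively. For the $\p_{vv}$ term, the potential integral needs the standard cancellation $\int\p_{vv}Z\,dw=0$ together with Hölder continuity of $\Phi$ in the backward variable.

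The lower bound is the step I expect to be the main obstacle. I would first obtain a near-diagonal lower bound from $Z$ dominating the remainder for small $s-t$. I would then use a chaining argument along optimal controlled paths of the associated control problem $\dot x=v$, $\dot v=\omega$, which is the standard Harnack-chain method for Kolmogorov operators. This yields $\G^{\l_-}\lesssim p^{\m}$ with structural constants. Since every constant depends only on $\k$, $\bar\a$, $T$ and the bounds on $\b$, none of which depends on $\m$, the estimates are uniform in $\m$, which is what the contraction argument later needs. Finally, identifying $p^{\m}$ with the transition density of \eqref{linFSDE2} follows from Itô's formula applied to $\int p^{\m}(t,\zz;s,\zz')\phi(\zz')d\zz'$, using strong well-posedness from Theorem~\ref{th2}.
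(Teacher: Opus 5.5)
Your proposal follows the same route as the paper, which proves Theorem~\ref{t11} simply by appealing to the parametrix construction and Gaussian bounds of \cite{MR4660246} for $\A_{\m}+\Y$. Your uniform-in-$\m$ check of the hypotheses is exactly what that citation leaves implicit: ellipticity and Lipschitz (hence anisotropic H\"older) regularity of $\s^{2}$, and boundedness and spatial regularity of $\b$ via \eqref{w11}, Theorem~\ref{t1} and \eqref{crucial}.

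One correction to your sketch. Because the coefficients are only measurable in time, the parametrix must freeze $\mathbf a$ along the integral curve of $\Y$ through the pole, i.e.\ at $(v',x'-(s-\tau)v')$ at time $\tau$, and not at the fixed point $\zz'=(v',x')$. Otherwise the factor $|\zz-\zz'|_{\K}^{\bar{\a}}$ contains, on the bulk of the Gaussian, a term of order $((s-t)|v|)^{\bar{\a}/3}$, which is not bounded by $(s-t)^{\bar{\a}/2}$ uniformly in $v$.
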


Next, we introduce the {\it push-forward} and {\it pull-back} operators acting on a distribution
$\th\in\mathcal{P}(\R^{2})$: $$\vec{P}^{\m}_{t,s}\th(z')
 :=\int_{\R^{2}}p^\m(t,z;s,z')\th(d z),\qquad
 \cev{P}^{\m}_{t,s}\n(z):=\int_{\R^{2}} p^\m(t,z;s,z')\th(d z'),$$
for $0\le t<s\le T$ and $z,z'\in\R^{2}$. We also introduce the forward and backward estimators of
the distance between two flows of distributions $\m,\n\in C([0,T];\mathcal{P}(\R^2))$:
\begin{align}\label{e23}
 \vec{I}_{t_1,t_2}^{\m,\n}(\th,\phi)
 &=\int_{\R^{2}}d z\phi(z)(\vec{P}^{\m}_{t_1,t_2}-\vec{P}^{\n}_{t_1,t_2})\th(z),
 \\
 \label{e30}
 \cev{I}_{t_1,t_2}^{\m,\n}(\th,\phi)
 &=\int_{\R^{2}}\th(d z)\int_{t_1}^{t_2}d t\cev{P}^{\m}_{t_1,t}
 \left(\A_{\m}-\A_{\n}\right)\cev{P}^{\n}_{t,t_2}\phi(z),
\end{align}
for any $0\le t_1<t_2\le T$, $\th\in\mathcal{P}(\R^2)$, and $\phi\in\Lip(\R^2)$. The forward and
backward estimators are related by the following duality formula, proved in \cite{MR4848654}:
\begin{equation}\label{e100}
  \vec{I}_{t_1,t_2}^{\m,\n}(\th,\phi)=\cev{I}_{t_1,t_2}^{\m,\n}(\th,\phi).
\end{equation}
Note that $\vec{P}^{\m}_{0,s}[\y]$ is the density of the marginal law $[\VX^{\m}_{s}]$, and, by
\eqref{KR}, we have
\begin{equation}\label{e22}
  \mathcal{W}_{1,T}([\VX^{\m}],[\VX^{\n}])=\max_{s\in[0,T]}\sup_{\|\phi\|_{\Lip(\R^{2})}\le 1}
  \vec{I}_{0,s}^{\m,\n}([\y],\phi).
\end{equation}

We are now in a position to prove that $\Phi$ in \eqref{Phi} is a contraction, at least for $T$
sufficiently small. Indeed, by \eqref{e100} we have
\begin{align}
 \left|\vec{I}_{0,s}^{\m,\n}([\y],\phi)\right|
 &=\left|\cev{I}_{0,s}^{\m,\n}([\y],\phi)\right|\\
 &\le
 \int_{0}^{s}\|\cev{P}^{\m}_{0,t}(\A_{\m}-\A_{\n})\cev{P}^{\n}_{t,s}\phi\|_{L^{\infty}}d t\\
 &\le \int_{0}^{s}\|(\A_{\m}-\A_{\n})\cev{P}^{\n}_{t,s}\phi\|_{L^{\infty}}d t\lesssim
\intertext{(by the Lipschitz continuity of $\s$ and $\b$ with respect to the measure argument; see
\eqref{w1} and \eqref{crucial})}
 &\lesssim \mathcal{W}_{1,T}(\m,\n)\int_{0}^{s}
  \left(\|\p_{vv}\cev{P}^{\n}_{t,s}\phi\|_{L^{\infty}}+
  \|\p_{v}\cev{P}^{\n}_{t,s}\phi\|_{L^{\infty}}\right)d t \lesssim
\intertext{(by the Gaussian estimates \eqref{es1}-\eqref{es2} and the standard potential estimates
for $\Gamma^{\l}$; see, e.g., Proposition 5.3 in \cite{DiFrancescoPascucci2})}\label{e7}
  &\lesssim\mathcal{W}_{1,T}(\m,\n)\int_{0}^{s}\frac{1}{\sqrt{s-t}}d t
  =2\sqrt{s}\mathcal{W}_{1,T}(\m,\n).
\end{align}
Thus, by \eqref{e22}, we obtain
\begin{equation}
 \mathcal{W}_{1,T}([\VX^{\m}],[\VX^{\n}])\le C \sqrt{T}\mathcal{W}_{1,T}(\m,\n)
\end{equation}
for some structural constant $C>0$ independent of $T$. This yields the contraction property for
$T$ sufficiently small. Therefore, $\Phi$ in \eqref{Phi} admits a fixed point
$\bar{\m}=[\VX^{\bar{\m}}]\in C([0,T],\mathcal{P}_1(\R^2))$. Moreover, since $\VX^{\bar{\m}}$
solves \eqref{linFSDE2} with coefficients of linear growth, classical SDE estimates imply that, in
fact, $\bar{\m}=[\VX^{\bar{\m}}]\in C([0,T],\mathcal{P}_2(\R^2))$. A continuation argument extends
the result to any finite time horizon $T$, thereby completing the proof.

\bibliographystyle{acm}
\bibliography{Bibtex-Final}
\end{document}